\newtheorem{theorem}{Theorem}[section]
\newtheorem{proposition}[theorem]{Proposition}
\newtheorem{corollary}[theorem]{Corollary}
\newtheorem{lemma}[theorem]{Lemma}
\newtheorem{definition}[theorem]{Definition}
\newtheorem*{unnumberedtheorem}{Theorem}
\newtheorem*{unnumberedproposition}{Proposition}
\newtheorem*{unnumberedcorollary}{Corollary}
\newtheoremstyle{remarkstyle}
{}
{}
{}
{}
{\bfseries}
{.} 
{7pt}
{}
\theoremstyle{remarkstyle}
\newtheorem{remark}[theorem]{Remark}
\newtheorem*{acknowledgements}{Acknowledgements}
\newcommand{\N}{\mathbb{N}}
\newcommand{\Z}{\mathbb{Z}}
\newcommand{\R}{\mathbb{R}}
\newcommand{\C}{\mathbb{C}}
\DeclareMathOperator{\Rep}{Rep}
\renewcommand{\H}{\mathbb{H}}
\newcommand{\TeichSpace}{\mathcal{T}}
\DeclareMathOperator{\SL}{SL}
\DeclareMathOperator{\PSL}{PSL}
\DeclareMathOperator{\PSO}{PSO}
\DeclareMathOperator{\id}{id}
\DeclareMathOperator{\vol}{vol}
\DeclareMathOperator{\Area}{Area}
\newcommand{\Energy}{\mathscr{E}}
\DeclareMathOperator{\Imag}{Im}
\DeclareMathOperator{\Real}{Re}
\newcommand{\parder}[2]{\frac{\partial #1}{\partial #2}}
\DeclarePairedDelimiter{\norm}{\lVert}{\rVert}
\DeclarePairedDelimiter{\abs}{\lvert}{\rvert}
\DeclarePairedDelimiterX{\inner}[2]{\langle}{\rangle}{#1, #2}
\title{The energy spectrum of metrics on surfaces}
\author{Ivo Slegers}
\begin{document}
\maketitle
	
\begin{abstract}
Let $(N,\rho)$ be a Riemannian manifold, $S$ a surface of genus at least two and let $f\colon S \to N$ be a continuous map. We consider the energy spectrum of $(N,\rho)$ (and $f$) which assigns to each point $[J]\in \TeichSpace(S)$ in the Teichm\"uller space of $S$ the infimum of the Dirichlet energies of all maps $(S,J)\to (N,\rho)$ homotopic to $f$. We study the relation between the energy spectrum and the simple length spectrum. Our main result is that if $N=S$, $f=\id$ and $\rho$ is a metric of non-positive curvature, then the energy spectrum determines the simple length spectrum. Furthermore, we prove that the converse does not hold by exhibiting two metrics on $S$ with equal simple length spectrum but different energy spectrum. As corollaries to our results we obtain that the set of hyperbolic metrics and the set of singular flat metrics induced by quadratic differentials satisfy energy spectrum rigidity, i.e. a metric in these sets is determined, up to isotopy, by its energy spectrum. We prove that analogous statements also hold true for Kleinian surface groups.
\end{abstract}

\section{Introduction}
In this paper we study, what we will call, the energy spectrum of a Riemannian manifold (see \Cref{sec:theenergyspectrum}). Let $S$ be a closed surface of genus at least two, let $\TeichSpace(S)$ be its Teichm\"uller space, let $(N,\rho)$ be a Riemannian manifold and let $[f]$ be a homotopy class of maps $S\to N$. In brief, the energy spectrum of $(N,\rho)$ and $[f]$ is the function on Teichm\"uller space that assigns to each $[J] \in \TeichSpace(S)$ the infimum of the energies of all Lipschitz maps $(S,J) \to (N,\rho)$ that lie in $[f]$. It gives a measure of how compatible $(N,\rho)$ and a point in Teichm\"uller space are.

The energy spectrum has been considered (under a different name\footnote{In \cite{LabourieCrossRatios} and \cite{Toledo} it is called the energy function or energy functional.}) by several authors. Toledo proved in \cite{Toledo} that the energy spectrum (for any $[f]$) is a plurisubharmonic function on Teichm\"uller space if $(N,\rho)$ is a compact manifold of non-positive Hermitian curvature. He used this result to give an alternative formulation of the rigidity theory of Siu and Sampson. In \cite{LabourieCrossRatios} Labourie used the energy spectrum to study Hitchin components in representation varieties. Given a Hitchin representation $\rho \colon \pi_1(S) \to \PSL(n,\R)$ he considered the energy spectrum of $N = \rho(\pi_1(S))\setminus \PSL(n,\R)/\PSO(n)$ and the homotopy class of maps that lift to $\rho$-equivariant maps $\widetilde{S} \to \PSL(n,\R)/\PSO(n)$. He proved that it is a proper function on Teichm\"uller space. Furthermore, he made the conjecture that it has a unique minimum. The author showed in \cite{Slegers2} that in this same setting the energy spectrum is strictly plurisubharmonic.

In this paper we examine to what extend a Riemannian manifold is determined by its energy spectrum. We begin by restricting ourselves to the case $N=S$ and $[f] = [\id]$. We will define, by analogy with simple length spectrum rigidity, the notion of energy spectrum rigidity. We will say a set $\mathcal{M}$ of metrics on $S$, determined up to isotopy, satisfies energy spectrum rigidity if the map $\mathcal{M}\to C^0(\TeichSpace(S))$, assigning to each metric its energy spectrum, is an injection. We will study the question which sets of metrics satisfy this type of rigidity.

The main results of this paper offer a comparison between the energy spectrum and the simple length spectrum. Our first result states that the energy spectrum determines the simple length spectrum.
\begin{unnumberedtheorem}[{\Cref{thm:energyspectrumgiveslengthspectrum}}]
Let $\rho, \rho'$ be non-positively curved Riemannian metrics on a surface $S$ of genus at least two. If the energy spectra of $(S,\rho)$ and $(S,\rho')$ (with $[f]=[\id]$) coincide, then the simple length spectra of $\rho$ and $\rho'$ coincide.
\end{unnumberedtheorem}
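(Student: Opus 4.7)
The plan is to recover $\ell_\rho(\gamma)$ for each simple closed curve $\gamma$ from the asymptotic behaviour of the energy spectrum along a sequence in $\TeichSpace(S)$ that pinches $\gamma$. The key quantity is the product $E_\rho(J)\cdot \mathrm{Ext}_J(\gamma)$ of the energy spectrum with the extremal length of $\gamma$; its extremal-length factor depends only on $J$ and $\gamma$, so any limit of this product along a canonical pinching sequence is controlled by $E_\rho$ alone.

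For the lower bound I would run the classical length--area argument. Given any Lipschitz $u\colon (S,J) \to (S,\rho)$ homotopic to the identity, take an annular neighbourhood $A$ of $\gamma$ in $(S,J)$ of modulus $m$, parametrized conformally as $[0,1] \times [0,m]$ with horizontal slices $\gamma_t$ isotopic to $\gamma$. Cauchy--Schwarz on each slice and integration over $t$ give
$$\int_0^m \ell_{u^*\rho}(\gamma_t)^2\, dt \;\leq\; \int_A \lvert du(\partial_\theta)\rvert^2\, d\theta\, dt \;\leq\; 2 E(u).$$
Since $\rho$ is non-positively curved and $u\circ \gamma_t$ is freely homotopic to $\gamma$, we have $\ell_\rho(\gamma) \leq \ell_{u^*\rho}(\gamma_t)$ for every $t$. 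Integrating, then optimizing over $A$ and taking an infimum over $u$, yields $\ell_\rho(\gamma)^2 \leq 2\, E_\rho(J)\,\mathrm{Ext}_J(\gamma)$.

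For the matching upper bound I would take a Teichm\"uller ray $J_n$ generated by the Jenkins--Strebel differential whose horizontal foliation is supported on $\gamma$, so that the maximal annulus $A_n$ around $\gamma$ has modulus $m_n \to \infty$ and $\mathrm{Ext}_{J_n}(\gamma) = 1/m_n$. Let $\gamma^\ast$ be the geodesic representative of $\gamma$ in $(S,\rho)$, of length $\ell = \ell_\rho(\gamma)$. Define a test map $u_n$ on $A_n$ by the cylindrical collapse $(\theta, t) \mapsto \gamma^\ast(\ell \theta)$; a direct computation gives $E(u_n|_{A_n}) = \tfrac{1}{2}\ell^2 m_n$. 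Extend $u_n$ across $\partial A_n$ to a Lipschitz map on $S \setminus A_n$ that is close to the identity. Since the hyperbolic thick part of $(S, J_n)$ has uniformly bounded area and bilipschitz geometry along the ray, this extension can be arranged to contribute $O(1)$ to the total energy. Hence $2 E_\rho(J_n)\, \mathrm{Ext}_{J_n}(\gamma) \leq \ell^2 + O(1/m_n) \to \ell^2$, matching the lower bound. Therefore $\ell_\rho(\gamma)^2 = \lim_{n\to\infty} 2 E_\rho(J_n)\, \mathrm{Ext}_{J_n}(\gamma)$, and because the right-hand side depends on $\rho$ only through $E_\rho$, the energy spectrum determines $\ell_\rho(\gamma)$ for every simple closed curve.

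The main technical obstacle is the extension of $u_n$ across $\partial A_n$: one must glue the cylindrical collapse to an approximation of the identity on the thick complement with the correct homotopy type and with total energy bounded uniformly in $n$. This hinges on a careful interpolation on a bounded-modulus collar and on the uniform control of the thick part of $(S,J_n)$ along the Teichm\"uller ray, which should be available from the standard degeneration picture of pinching sequences.
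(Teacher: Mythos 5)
Your proposal is correct in outline but takes a genuinely different route from the paper. The paper keeps the conformal structure fixed and iterates the Dehn twist about $\gamma$, recovering $\tfrac12\ell_\rho^2(\gamma)$ as the quadratic growth rate of $\Energy(T_\gamma^n X)$ (\Cref{lem:tauupperbound} and \Cref{lem:taulowerbound}); the lower bound there rests on a length estimate for twisted curves (\Cref{lem:dehntwistlengthestimate}) proved via the CAT(0)/Gromov-hyperbolic geometry of the universal cover---which is exactly where the non-positive curvature hypothesis is used---and the two bounds only pinch together after introducing an auxiliary curve $\eta$ and the special surfaces of \Cref{lem:almostcollarequality}. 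You instead degenerate $\gamma$ and read off $\ell_\rho^2(\gamma)$ as $\lim_n 2\,\Energy(J_n)\,E_{J_n}(\gamma)$. Your lower bound is precisely Minsky's inequality (\Cref{lem:minskyestimate}), which the paper also invokes, and it needs no curvature assumption: $\ell_\rho(\gamma)\le l_\rho(u\circ\gamma_t)$ holds simply because $u\circ\gamma_t$ is freely homotopic to $\gamma$ and $\ell_\rho$ is an infimum over the free homotopy class. Your upper bound dispenses with the Dehn-twist estimate and the auxiliary curve entirely, so if it closes, your argument would in fact apply to arbitrary Riemannian metrics. The price is the step you yourself flag: in the paper the extension of the collapse map lives on a \emph{fixed} surface, so its energy is trivially a single constant $C=\mathcal{E}(k_0|_Y)$, whereas you must bound the extension energy uniformly along a degenerating family. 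Note also that for the maximal Strebel annulus the complement is just the critical graph, so the cylindrical collapse does not literally descend to $S$; you must pass to a slightly smaller annulus (or the hyperbolic thick part) and use that the conformal structures of the complementary pieces converge in augmented Teichm\"uller space to obtain a uniformly Lipschitz extension with the correct boundary values and homotopy class. That is standard but is the genuine content of your upper bound and must be written out; once it is, the final deduction that equal energy spectra force $\ell_\rho(\gamma)=\ell_{\rho'}(\gamma)$ proceeds exactly as in the paper.
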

Our second second results shows that the converse is not true. Namely, the energy spectrum carries strictly more information and hence is not determined by the simple length spectrum.
\begin{unnumberedproposition}[{\Cref{prop:lengthspectrumdoesnotgiveenergyspectrum}}]
For every hyperbolic metric on a surface there exists a negatively curved Riemannian metric on that surface with equal simple length spectrum but different energy spectrum.
\end{unnumberedproposition}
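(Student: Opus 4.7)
The plan is to construct $\rho'$ as the pullback $\phi^\ast \rho$ of $\rho$ by a suitably chosen diffeomorphism $\phi \colon S \to S$. Because the isometry group of a closed hyperbolic surface of genus $\geq 2$ is finite, its image in the mapping class group $\mathrm{MCG}(S)$ is finite, whereas $\mathrm{MCG}(S)$ itself is infinite. I would therefore pick $\phi$ whose isotopy class $[\phi]$ is not represented by any isometry of $(S,\rho)$, and set $\rho' := \phi^\ast \rho$. As a diffeomorphic image of a hyperbolic metric, $\rho'$ is itself hyperbolic and hence negatively curved, and $\phi$ defines an isometry $(S,\rho') \to (S,\rho)$ which carries simple closed geodesics bijectively onto simple closed geodesics preserving lengths. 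Thus $\rho$ and $\rho'$ have the same simple length spectrum.

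To separate the energy spectra I evaluate both at the point $[J] = [\rho] \in \TeichSpace(S)$. For $\rho$ itself the identity is harmonic and conformal, so the minimum energy in the class $[\id]$ of maps $(S,[\rho]) \to (S,\rho)$ equals $\Area(S,\rho)$. For $\rho' = \phi^\ast\rho$ the change-of-variables identity $\Energy_{\phi^\ast\rho}(f) = \Energy_\rho(\phi\circ f)$ rewrites the minimum at $[\rho]$ as
\[
\inf_{f\in[\id]} \Energy_{\phi^\ast\rho}(f) \;=\; \inf_{g\in[\phi]} \Energy_\rho(g),
\]
which, by Eells--Sampson and Hartman's uniqueness theorem, is attained by the unique harmonic map $h$ in the free homotopy class $[\phi]$.

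The crux is then to prove $\Energy_\rho(h) > \Area(S,\rho)$. This follows from the classical pointwise bound $e(u) \geq |J_u|$ between energy density and Jacobian, which integrates to $\Energy_\rho(u) \geq |\deg(u)|\,\Area(S,\rho)$, with equality iff $u$ is weakly conformal. Since $h$ is homotopic to the diffeomorphism $\phi$ and hence has degree $\pm 1$, equality would force $h$ to be a conformal self-map of the Riemann surface $(S,[\rho])$. By uniformization every conformal automorphism of $(S,[\rho])$ is an isometry of $(S,\rho)$, contradicting our choice of $\phi$. Therefore $\Energy_\rho(h) > \Area(S,\rho)$, and the two energy spectra differ at $[\rho]$.

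I expect the main obstacle to be this last strict inequality: ruling out that the harmonic representative of $[\phi]$ is conformal is precisely what forces $\phi$ to be chosen outside the finite image of $\mathrm{Isom}(S,\rho)$ in $\mathrm{MCG}(S)$. Since that image is always finite while $\mathrm{MCG}(S)$ is infinite, a suitable $\phi$ can be found for every hyperbolic $\rho$, producing the required $\rho'$ uniformly.
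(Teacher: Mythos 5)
There is a genuine gap, and it is fatal to the whole strategy. The simple length spectrum in this paper is the \emph{marked} spectrum: the vector $(\ell_\rho(\gamma))_{\gamma\in\mathcal{S}}$ indexed by homotopy classes of simple closed curves on $S$, and two metrics have ``equal simple length spectrum'' only if $\ell_\rho(\gamma)=\ell_{\rho'}(\gamma)$ for \emph{each} class $\gamma$. For $\rho'=\phi^*\rho$ one has $\ell_{\rho'}(\gamma)=\ell_\rho(\phi_*\gamma)$: the isometry $\phi\colon(S,\rho')\to(S,\rho)$ carries the geodesic representative of $\gamma$ to the geodesic representative of $\phi_*\gamma$, not of $\gamma$. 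So your construction preserves the \emph{unmarked} spectrum but permutes the marked one. Worse, it cannot be repaired by a cleverer choice of $\phi$: hyperbolic metrics are determined up to isotopy by their marked simple length spectrum (Teichm\"uller space embeds in $\R_{>0}^{\mathcal{S}}$ via simple length functions; this is exactly the rigidity underlying \Cref{cor:hyperbolicmetricenergyspecrigidity}). Hence if $\phi^*\rho$ and $\rho$ had equal marked simple length spectra, $\phi^*\rho$ would be isotopic to $\rho$, i.e.\ $[\phi]$ would fix $[\rho]\in\TeichSpace(S)$ and be realised by an isometry of $(S,\rho)$ --- precisely the case you exclude, and in which the energy spectra coincide as well. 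In short, no diffeomorphism $\phi$ can produce the required example; the harmonic-map and degree arguments in your last two paragraphs are correct in themselves but are applied to a metric that does not satisfy the hypothesis you need.

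The paper's proof necessarily leaves the hyperbolic (indeed the pullback) world. It uses the Birman--Series theorem that the union $\mathcal{G}$ of all simple closed geodesics of $(S,\rho)$ is nowhere dense, chooses a bump function $\chi$ supported in an open set $U$ with $\overline{U}\cap\overline{\mathcal{G}}=\emptyset$, and sets $\rho'=(1+\delta\chi)\rho$ for small $\delta>0$. Then $\rho'$ is still negatively curved, agrees with $\rho$ near every simple closed geodesic, so the two metrics have literally the same simple closed geodesics with the same lengths (equal marked simple length spectra), while $\Area(S,\rho')>\Area(S,\rho)$. Since by \Cref{lem:minimumenergyisarea} the minimum of the energy spectrum equals the area, the energy spectra differ. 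The part of your argument worth keeping is the observation that the energy spectrum detects area via the inequality $e(u)\geq\abs{J_u}$; that is exactly the mechanism the paper uses, but it must be fed a perturbation that changes the area without touching the simple geodesics, rather than a remarking by a mapping class.
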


In summary, the energy spectrum is a strictly more sensitive way to tell metrics on a surface apart. This raises the following interesting question: how does the energy spectrum compare to the (full) marked length spectrum? It is, at the moment, unknown to the author whether the energy spectrum carries the same information as the marked length spectrum or whether it carries strictly less information. We discuss this question in more depth in \Cref{sec:furthercomparison}.

As a corollary to our results we obtain that the set of hyperbolic metrics satisfies energy spectrum rigidity.
\begin{unnumberedcorollary}[{\Cref{cor:hyperbolicmetricenergyspecrigidity}}]
The set of hyperbolic metrics on $S$, defined up to isotopy, satisfies energy spectrum rigidity.
\end{unnumberedcorollary}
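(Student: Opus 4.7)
The plan is to reduce the corollary to a known rigidity statement by invoking the preceding theorem. Suppose $\rho$ and $\rho'$ are hyperbolic metrics on $S$ with the same energy spectrum (with $[f] = [\id]$). Since any hyperbolic metric is negatively, hence non-positively, curved, the theorem above applies and tells us that $\rho$ and $\rho'$ must have the same simple length spectrum.

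Next I would invoke the classical fact that a hyperbolic structure on a closed surface of genus at least two is determined up to isotopy by its marked simple length spectrum. This is a standard consequence of the work of Fricke and of Thurston: the length functions of isotopy classes of simple closed curves give a proper embedding of $\TeichSpace(S)$ into $\R^{\mathcal{S}}$ (where $\mathcal{S}$ denotes the set of isotopy classes of essential simple closed curves), and in fact already finitely many such curves suffice to separate points. Hence $\rho$ and $\rho'$ define the same point in $\TeichSpace(S)$ and so are isotopic.

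The only real content here is the main theorem; the corollary itself is a two-line deduction. The only thing to be careful about is that the ``simple length spectrum'' produced by the main theorem is the marked simple length spectrum rather than its unmarked counterpart. This should be automatic, because the energy spectrum is defined relative to the fixed homotopy class $[\id]$, which records the marking: distinct isotopy classes of simple closed curves on $S$ correspond to distinct conjugacy classes in $\pi_1(S)$, and the proof of the main theorem uses this marking when extracting lengths from energies. So no additional argument is required beyond citing the main theorem together with the classical hyperbolic length-spectrum rigidity result.
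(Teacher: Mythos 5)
Your argument is correct, but it follows a genuinely different route from the paper's. You deduce the corollary from \Cref{thm:energyspectrumgiveslengthspectrum} together with the classical fact that a point of $\TeichSpace(S)$ is determined by its marked simple length spectrum (Fricke/Thurston), and your remark that the marking is already built into the fixed homotopy class $[\id]$ is accurate. The paper instead gives a more elementary proof that bypasses the main theorem entirely: by \Cref{lem:minimumenergyisarea}, for a non-positively curved metric $\rho$ the energy spectrum is bounded below by $\Area(S,\rho)$ and attains that value at the unique point $[\rho]\in\TeichSpace(S)$, so a hyperbolic metric can be read off from its energy spectrum simply by locating the minimiser. That argument uses only the existence of harmonic maps and the pointwise comparison of energy density with the Jacobian, whereas yours invokes the full strength of the Dehn-twist asymptotics behind \Cref{thm:energyspectrumgiveslengthspectrum} plus an external rigidity theorem. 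The trade-off is that your route is the one that generalises to any family of metrics known to satisfy simple length spectrum rigidity (this is exactly how the paper handles singular flat metrics in \Cref{cor:singflatmetricsenergyspecrigidity}), while the paper's shortcut is specific to the case where the minimiser of the energy spectrum alone already pins down the metric.
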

A quadratic differential on $S$ induces a singular flat metric (see \Cref{subsec:conformalgeometry}). It is proved in \cite{DuchinLeiningerRafi} that the set of these metrics satisfies simple length spectrum rigidity. It then follows from our results that this set also satisfies energy spectrum rigidity.
\begin{unnumberedcorollary}[{\Cref{cor:singflatmetricsenergyspecrigidity}}]
The set of singular flat metrics that are induced by quadratic differentials, defined up to isotopy, satisfies energy spectrum rigidity.
\end{unnumberedcorollary}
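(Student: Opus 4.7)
The plan is to deduce this corollary by composing two rigidity results: the main theorem of the paper (energy spectrum determines simple length spectrum) and the result of Duchin--Leininger--Rafi (simple length spectrum rigidity for singular flat metrics induced by quadratic differentials). Concretely, suppose $\rho$ and $\rho'$ are singular flat metrics on $S$ coming from quadratic differentials $q$ and $q'$ and assume their energy spectra agree. I would first argue that their simple length spectra must agree, and then invoke \cite{DuchinLeiningerRafi} to conclude that $\rho$ and $\rho'$ are isotopic.

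The subtlety is that the main theorem is stated for smooth non-positively curved Riemannian metrics, while singular flat metrics from quadratic differentials have finitely many cone points of angle $\geq 2\pi$ and are therefore only CAT(0), not Riemannian. The first step is therefore to extend the main theorem to this setting. I expect one of two strategies to work. The direct route is to inspect the proof of \Cref{thm:energyspectrumgiveslengthspectrum} and verify that the ingredients it uses (existence, uniqueness and regularity of energy minimizing harmonic maps into CAT(0) targets, and the relation between the energy of harmonic maps and the lengths of simple closed geodesics) continue to hold for singular flat metrics; harmonic map theory into CAT(0) spaces and into surfaces with conical singularities is well-developed in the literature and should suffice. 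The alternative is an approximation argument: smooth out the cone points of $\rho$ and $\rho'$ to obtain sequences of genuinely non-positively curved Riemannian metrics $\rho_n,\rho'_n$ converging to $\rho,\rho'$, apply the main theorem to each $\rho_n,\rho'_n$, and pass to the limit using continuity of the energy spectrum and of the simple length spectrum under such approximations.

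Once this extension of the main theorem is in place, the rest of the argument is immediate. Equality of energy spectra implies equality of simple length spectra. By the result of Duchin--Leininger--Rafi, singular flat metrics induced by quadratic differentials are determined up to isotopy by their simple length spectrum. Hence $\rho$ and $\rho'$ are isotopic, proving the desired energy spectrum rigidity.

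The main obstacle I anticipate is the first step: justifying that the energy/length machinery of the main theorem carries over to singular flat metrics. If the proof of \Cref{thm:energyspectrumgiveslengthspectrum} is already formulated in a way that only uses the CAT(0) property of the target together with local regularity away from the singular set, then this extension is routine; otherwise the smoothing approach will have to be carried out carefully so that the energy spectra and the simple length spectra converge under the smoothing.
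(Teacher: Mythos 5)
Your overall strategy is the paper's: deduce equality of simple length spectra from equality of energy spectra via \Cref{thm:energyspectrumgiveslengthspectrum}, then invoke Duchin--Leininger--Rafi. Your concern about extending the main theorem to non-Riemannian, singular flat targets is legitimate and is exactly the point the paper addresses only by a remark: the energy infimum is still realised by a harmonic map in the Korevaar--Schoen sense because the universal cover of a singular flat surface is non-positively curved in the sense of Alexandrov, and the estimates in the proof (Minsky's inequality, the stairstep-path length estimate in the CAT(0)/Gromov-hyperbolic universal cover) only use this metric-space structure. So your first proposed route is the intended one; the smoothing alternative is unnecessary.

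However, there is one concrete step you are missing. The Duchin--Leininger--Rafi theorem, as used here (\Cref{thm:singularflatsimplelengthrigidity}), asserts simple length spectrum rigidity only for the set $\mathcal{M}_1$ of \emph{unit-area} singular flat metrics; you invoke it for arbitrary singular flat metrics, which is not what the cited statement gives (two metrics of different areas could a priori have spectra that become merely proportional, not equal, after normalisation). The paper closes this gap using \Cref{lem:minimumenergyisarea}: the minimum of the energy spectrum over Teichm\"uller space equals $\Area(S,\rho)$, so equal energy spectra force $\Area(S,\rho)=\Area(S,\rho')$. One then rescales both metrics to unit area; the rescaled metrics have equal simple length spectra (being rescaled by the same factor), and only then does the Duchin--Leininger--Rafi theorem apply. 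Since you have the energy spectrum at your disposal, this is an easy repair, but as written your appeal to their result is not licensed.
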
 

Our interest in these questions surrounding the energy spectrum stems from the work of Labourie in \cite{LabourieCrossRatios} (as described above). He asked whether it is possible to assign to each Hitchin representation an associated point in Teichm\"uller space, in a mapping class group invariant way. In cases where the aforementioned Labourie conjecture is true such a projection can be constructed by mapping a Hitchin representation to the unique minimiser of its energy spectrum. The Labourie conjecture has been proved for real split simple Lie groups of rank two (\cite{LabourieCyclic}). Markovi{\'c} showed in a recent preprint (\cite{Markovic}) that for the semisimple Lie group $G = \Pi_{i=1}^3 \PSL(2,\R)$ the analogue of Labourie's conjecture does not hold. The conjecture, however, remains open for simple Lie groups of rank at least three. 

Considering this situation from a slightly different angle we ask ourselves how much information about a Hitchin representation is actually encoded in its energy spectrum. More concretely, we ask whether a Hitchin representation is determined, up to conjugacy, by its energy spectrum. We hope that the results of this paper are a step towards answering this question in the affirmative. We illustrate this by applying our results to the simpler setting of Kleinian surface groups. We prove the following result.

\begin{unnumberedtheorem}[{\Cref{thm:energyspectrumgiveslengthspectrumkleinian}}]
Let $\rho, \rho' \colon \Gamma \to \PSL(2,\C)$ be two Kleinian surface groups. If the energy spectra of $\rho$ and $\rho'$ coincide, then their simple simple length spectra coincide.
\end{unnumberedtheorem}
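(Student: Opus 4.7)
The plan is to transpose the proof of the surface case to the Kleinian setting, using $\H^{3}$ equipped with the action of $\rho(\Gamma)$ in place of the target surface. The crucial ingredient will be an asymptotic formula: for every simple closed curve $\gamma\subset S$ and every sequence $[J_{n}]\in\TeichSpace(S)$ with $\mathrm{Ext}_{J_{n}}(\gamma)\to 0$,
\[
\lim_{n\to\infty}\mathrm{Ext}_{J_{n}}(\gamma)\cdot E_{\rho}([J_{n}])\;=\;\tfrac{1}{2}\,\ell_{\rho}(\gamma)^{2},
\]
where $\ell_{\rho}(\gamma)$ denotes the translation length of $\rho(\gamma)$ on $\H^{3}$. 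With this formula in hand the theorem is immediate: if $E_{\rho}\equiv E_{\rho'}$ on $\TeichSpace(S)$, then evaluating both sides along a pinching sequence for each simple closed curve $\gamma$ forces $\ell_{\rho}(\gamma)=\ell_{\rho'}(\gamma)$.

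To realise the energy spectrum by an honest map I would invoke the Corlette--Donaldson existence theorem for the discrete faithful representation $\rho$: this produces, for each $[J]\in\TeichSpace(S)$, a $\rho$-equivariant harmonic map $u_{J}\colon(\widetilde{S},\widetilde{J})\to\H^{3}$ whose Dirichlet energy equals $E_{\rho}([J])$. For the asymptotic upper bound I would construct explicit test maps: select an annular neighbourhood $A_{n}$ of $\gamma$ in $(S,J_{n})$ whose modulus $m_{n}$ satisfies $m_{n}\cdot\mathrm{Ext}_{J_{n}}(\gamma)\to 1$, realise $A_{n}$ conformally as the flat cylinder $[0,m_{n}]\times\R/\Z$, and on it map each horizontal circle once around the closed geodesic in $\H^{3}/\rho(\Gamma)$ representing $\rho(\gamma)$; on $S\setminus A_{n}$ extend by any fixed Lipschitz equivariant map using a short interpolation near $\partial A_{n}$. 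A direct computation on the flat cylinder yields Dirichlet energy $\tfrac{1}{2}\ell_{\rho}(\gamma)^{2}m_{n}+O(1)$, and multiplying by $\mathrm{Ext}_{J_{n}}(\gamma)$ gives $\limsup_{n}\mathrm{Ext}_{J_{n}}(\gamma)\cdot E_{\rho}([J_{n}])\le\tfrac{1}{2}\ell_{\rho}(\gamma)^{2}$. For the lower bound I restrict any finite-energy $\rho$-equivariant map to a lift $\widetilde{A}_{n}\subset\widetilde{S}$ of $A_{n}$ stabilised by $\rho(\gamma)$: each horizontal circle in $A_{n}$ lifts to an arc from some $x\in\H^{3}$ to $\rho(\gamma)\cdot x$ and so has length at least $\ell_{\rho}(\gamma)$. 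Cauchy--Schwarz applied pointwise and integrated across the cylinder produces $E_{\rho}([J_{n}])\ge\tfrac{1}{2}\ell_{\rho}(\gamma)^{2}m_{n}$, giving the matching lower asymptotic.

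The main obstacle I foresee is constructing the test map when $\rho(\gamma)$ is parabolic, in which case there is no closed geodesic in $\H^{3}/\rho(\Gamma)$ around which to wind. One would instead exploit the cusp geometry: mapping the core of $A_{n}$ deep into a horoball based at the fixed point of $\rho(\gamma)$ makes the displacement of $\rho(\gamma)$ arbitrarily small, so the cylinder contribution to the energy tends to $0$, in agreement with $\ell_{\rho}(\gamma)=0$. Keeping the energy of the interpolation near $\partial A_{n}$ bounded requires some care but should be manageable. Apart from this, the arguments run in parallel with the surface case, with the non-positive curvature of $\H^{3}$ providing the convexity needed to control the harmonic map off the pinching annulus and justify discarding that contribution in the asymptotic.
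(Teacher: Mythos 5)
Your argument is correct in outline but follows a genuinely different route from the paper. The paper recovers $\ell_\rho(\gamma)$ from the quadratic growth rate of $\Energy(T^n_\gamma X)$ under iterated Dehn twisting at a fixed, carefully chosen point $X$ (supplied by \Cref{lem:almostcollarequality}); the lower bound there requires the estimate $\ell_\rho(T^n_\gamma\eta)\ge n\cdot i(\gamma,\eta)\cdot\ell_\rho(\gamma)-C$ for a dual curve $\eta$, proved by an explicit $\PSL(2,\C)$ matrix computation (\Cref{lem:dehntwistlengthestimatekleinian}) and only for $i(\gamma,\eta)\in\{1,2\}$. You instead pinch $\gamma$ along a ray in $\TeichSpace(S)$ and read off $\ell_\rho(\gamma)$ from the asymptotics of $E_{X}(\gamma)\cdot\Energy(X)$; your lower bound is just Minsky's inequality (\Cref{lem:minskyestimate}) applied to $\gamma$ itself, so you bypass the matrix computation and the dual curve entirely --- a genuine simplification in the Kleinian setting, at the cost of needing the standard asymptotics of extremal length under cylinder insertion. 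Two points need tightening. First, your limit formula fails for an arbitrary sequence with $E_{J_n}(\gamma)\to 0$: if a second curve disjoint from $\gamma$ with positive translation length is pinched faster, Minsky's bound forces $E_{J_n}(\gamma)\cdot\Energy(J_n)\to\infty$. You must therefore fix the sequence, e.g.\ by grafting a flat cylinder of modulus $m_n\to\infty$ into a fixed surface cut along $\gamma$; this also guarantees what your upper bound silently uses, namely that the complement of the annulus has constant conformal type, so the test map has $O(1)$ energy there (in the paper's Dehn-twist version the point $X$ is fixed and this is automatic, whereas for a varying $J_n$ it is not). Second, Corlette--Donaldson is not needed: both bounds apply directly to the infimum defining $\Energy$. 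Your treatment of the parabolic case (winding around loops of length $\le\delta$ deep in the cusp, giving an upper bound $\tfrac12 E_X(\gamma)\delta^2$ for every $\delta$) is exactly the paper's.
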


Combined with the results of Bridgeman and Canary in \cite{BridgemanCanary} we obtain the following corollary.

\begin{unnumberedcorollary}[{\Cref{cor:kleiniansurfgroupenergyspecrigidity}}]
If $\rho, \rho' \colon \Gamma \to \PSL(2,\C)$ are Kleinian surface groups with equal energy spectrum, then $\rho'$ is conjugate to either $\rho$ or $\overline{\rho}$.
\end{unnumberedcorollary}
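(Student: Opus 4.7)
The corollary is essentially a two-step assembly, so my plan is to identify the right inputs and check that they plug together without slack. The first step is to apply the preceding theorem (\Cref{thm:energyspectrumgiveslengthspectrumkleinian}) to the pair $\rho, \rho'$: since their energy spectra coincide, that theorem immediately yields that their simple length spectra coincide, i.e.\ $\ell_\rho(\gamma) = \ell_{\rho'}(\gamma)$ for every simple closed curve $\gamma$ on $S$.

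The second step is to invoke the result of Bridgeman and Canary from \cite{BridgemanCanary}. Their theorem states that a Kleinian surface group is determined up to conjugacy and complex conjugation by its simple marked length spectrum: if $\rho, \rho' \colon \Gamma \to \PSL(2,\C)$ are Kleinian surface groups with equal simple length spectrum, then $\rho'$ is conjugate to $\rho$ or to $\overline{\rho}$. Combining this with the first step yields the conclusion.

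The only thing I would want to double-check is that the notion of ``simple length spectrum'' used in \Cref{thm:energyspectrumgiveslengthspectrumkleinian} is the marked one (indexed by free homotopy classes of simple curves on $S$, via the surface group $\Gamma = \pi_1(S)$) rather than the unmarked set of values, since the Bridgeman--Canary rigidity is for the marked version. Given the earlier usage in the paper (the analogous corollary for metrics on $S$ and the formulation of \Cref{thm:energyspectrumgiveslengthspectrum}), this should match: the energy spectrum is naturally a function on $\TeichSpace(S)$ carrying the marking, and the derivation of the simple length spectrum from it preserves this marking. Once this compatibility is noted, there is no further work to do: the corollary is a formal consequence of the two cited results. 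The substantive content lies entirely in \Cref{thm:energyspectrumgiveslengthspectrumkleinian} and in the Bridgeman--Canary theorem; this corollary is merely the juxtaposition.
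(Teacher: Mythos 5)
Your proposal is correct and is exactly the paper's argument: \Cref{thm:energyspectrumgiveslengthspectrumkleinian} yields equality of the (marked) simple length spectra, and the Bridgeman--Canary rigidity theorem then gives conjugacy to $\rho$ or $\overline{\rho}$. The marking compatibility you flag is indeed satisfied, as the spectra throughout are indexed by free homotopy classes of simple closed curves on $S$.
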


Unfortunately, the results obtained in this paper are not enough to conclude the same for Hitchin representations. In \Cref{sec:hitchinrepresentationsenergyspectrum} we discuss briefly the further steps that would be required to do so.

\begin{acknowledgements}
The author wishes to thank Ursula Hamenst\"adt for the many useful suggestions she has made during this project and Gabriele Viaggi for fruitful discussions. The author was supported by the IMPRS graduate program of the Max Planck Institute for Mathematics.
\end{acknowledgements}
 
\section{Prerequisites}\label{sec:prerequisitesenergyspectrum}
We let $S$ be a closed and oriented surface. We will denote its genus by $g$.
\subsection{Teichm\"uller space}\label{subsec:teichmullerspace}
We recall the definition of the Teichm\"uller space of a surface. A general reference for the concepts discussed in this section is \cite{Hubbard}.

A \textit{marked complex structure} on $S$ is a pair $(X,\phi)$ where $X$ is an Riemann surface and $\phi \colon S \to X$ is an orientation preserving diffeomorphism. Two marked complex structures $(X,\phi)$ and $(X',\phi')$  are equivalent if there exists a biholomorphism $\psi \colon X' \to X$ such that $\phi^{-1}\circ \psi \circ \phi'\colon S \to S$ is isotopic to the identity map. 

\begin{definition}
The Teichm\"uller space of $S$, denoted $\TeichSpace(S)$, is the set of equivalence classes of marked complex structures on $S$.
\end{definition}

Teichm\"uller space can be equipped with a smooth structure (or even a complex structure) and if $S$ is a surface of genus $g\geq 2$, then $\TeichSpace(S)$ is diffeomorphic to $\R^{6g-6}$.

We will describe here some alternative ways to describe $\TeichSpace(S)$ which will be more practical to work with in the applications we have in mind. The complex structure on a Riemann surface $X$ is uniquely determined by an automorphism $J_X \colon TX \to TX$ that satisfies $J_X^2 = -\id$. We note that in general such an automorphism is only an almost complex structure, however on surfaces every almost complex structure is integrable and hence determines a complex structure. We see that each marking $(X,\phi)$ determines a complex structure $J = \phi^*J_X$ on $S$. It follows that we can alternatively take
\[
\TeichSpace(S) = \{J \mid J \colon TS\to TS \text{ is complex structure on }S\}/\sim
\]
as definition of Teichm\"uller space. Here we define that $J\sim J'$ if and only if a diffeomorphism $\psi \colon S\to S$ isotopic to the identity exists such that $J'= \psi^* J$. 
Furthermore, on a surface a complex structure is uniquely determined by a conformal class of metrics and vice versa. So we could also describe $\TeichSpace(S)$ as the set of conformal structures up to isotopy. Finally, if $S$ is a surface of genus at least two, then in each conformal class of metrics on $S$ there exists a unique hyperbolic metric. So we can also take
\[
\TeichSpace(S) = \{\rho \mid \rho \text{ is a hyperbolic metric on }S\}/\sim
\]
where $\rho \sim \rho'$ if $\rho' = \psi^*\rho$ for some diffeomorphism $\psi$ of $S$ that is isotopic to the identity.

The different views on Teichm\"uller space will be useful at different points in our discussion. If we consider a point $X\in \TeichSpace(S)$ we will think of this as the surface $S$ equipped with either a complex structure or a hyperbolic metric, each determined up to isotopy.
\subsection{Length of curves}
Let $\rho$ be a Riemannian metric on $S$. If $\gamma\subset S$ is a path in $S$, then we denote by $l_\rho(\gamma)$ its length measured with respect to $\rho$. If $[\gamma]$ is a free homotopy class of closed loops on $S$, then we denote
\[
\ell_\rho([\gamma]) := \inf_{\gamma'\in [\gamma]} l_\rho(\gamma').
\]
Often we will not distinguish between a closed loop on $S$ and the free homotopy class it determines and simply write $\ell_\rho(\gamma)$ for $\ell_\rho([\gamma])$. 

We will denote by $\mathcal{C}$ the set of homotopy classes of closed curves on $S$ and by $\mathcal{S} \subset \mathcal{C}$ the set of homotopy classes of simple closed curves. The \textit{marked length spectrum} of a metric $\rho$ is the vector
\[
(\ell_\rho(\gamma))_{\gamma\in \mathcal{C}} \in (\R_{>0})^\mathcal{C}.
\]
Similarly, the \textit{(marked) simple length spectrum} of a metric $\rho$ is
\[
(\ell_\rho(\gamma))_{\gamma\in \mathcal{S}} \in (\R_{>0})^\mathcal{S}.
\]
If $\mathcal{M}$ is a set of metrics on $S$, defined up to isometry, then we can ask whether the marked length spectrum or even the simple length spectrum distinguishes metrics in that set. If $\rho \mapsto (\ell_\rho(\gamma))_{\gamma\in \mathcal{C}}$ is an injection of $\mathcal{M}$ into $(\R_{>0})^\mathcal{C}$, then we say $\mathcal{M}$ satisfies \textit{length spectrum rigidity}. If the map $\rho \mapsto (\ell_\rho(\gamma))_{\gamma\in \mathcal{S}}$ injects $\mathcal{M}$ into $(\R_{>0})^\mathcal{S}$, then we say $\mathcal{M}$ satisfies \textit{simple length spectrum rigidity}.

If $[\gamma], [\eta]$ are conjugacy classes of simple closed curves on $S$, then we define their \textit{intersection number} as
\[
i([\gamma],[\eta]) = \min\{\abs{\gamma' \cap \eta'} \mid \gamma'\in [\gamma], \eta'\in [\eta]\}.
\]
If $\gamma$ and $\eta$ are simple closed curves, then, for convenience, we will write $i(\gamma,\eta)$ rather than $i([\gamma], [\eta])$. When $\gamma$ and $\eta$ are simple closed geodesics for a non-positively curved metric on $S$, then $\abs{\gamma\cap\eta}$ realises $i(\gamma,\eta)$.

\subsection{Dehn twists}
Assume $S$ has genus at least one and let $\gamma\subset S$ a simple closed curve. Let $N \subset S$ be a closed collar neighbourhood of $\gamma$ which we will identify, in an orientation preserving way, with $[0,1]\times \R/\Z$. The \textit{Dehn twist} around $\gamma$ is the orientation preserving homeomorphism $T_\gamma$ of $S$ that is equal to the identity map outside of $N$ and is given by 
\[
(t,[\theta]) \mapsto (t, [\theta + t])
\]
on $N \cong [0,1]\times \R/\Z$. Since these definitions coincide on the boundary of $N$, we see that $T_\gamma$ is indeed continuous. Note that the isotopy class of $T_\gamma$ is independent of the choice of representative in $[\gamma]$ and of the choice of collar neighbourhood $N$. In general we will refer to any homeomorphism in the isotopy class determined by $T_\gamma$ as a Dehn twist around $\gamma$. By a slight modification to the above construction it is possible to find a smooth representative of the isotopy class.

A Dehn twist defines a mapping on Teichm\"uller space. Namely, if $[(X,\phi)]\in \TeichSpace(S)$, then $T_\gamma\cdot [(X,\phi)] = [(X,\phi \circ T_\gamma^{-1})]$. To put this in a slightly broader context we note that the Dehn twist is an element of the mapping class group of the surface $S$. The mapping class group has a natural action on Teichm\"uller space which is given by precisely the mapping defined here for the Dehn twist.

If $\eta \subset S$ is a closed loop (resp. a homotopy class of closed loops), then we define $T_\gamma \eta$ to be the loop $T_\gamma \circ \eta$ (resp. the homotopy class containing this loop).

In our proof of \Cref{thm:energyspectrumgiveslengthspectrum} we will need a lower bound on the length of a loop that has been Dehn twisted often. The following lemma provides such an estimate.

\begin{lemma}\label{lem:dehntwistlengthestimate}
Let $(S,\rho)$ be an oriented surface of genus at least two equipped with a metric of non-positive curvature. For every pair $\gamma,\eta\subset S$ of simple closed curves there exists a constant $C = C(\gamma,\eta)>0$ such that
\[
\ell_\rho(T^n_\gamma\eta) \geq n \cdot i(\gamma,\eta) \cdot \ell_\rho(\gamma) - C
\]
for all $n\geq 1$.
\end{lemma}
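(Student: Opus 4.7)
The plan is to work in the universal cover $\widetilde{S}$ equipped with the lifted metric $\widetilde{\rho}$. Since $\rho$ is non-positively curved, $(\widetilde{S}, \widetilde{\rho})$ is a CAT(0) space, and $\ell_\rho(T_\gamma^n \eta)$ coincides with the translation length on $\widetilde{S}$ of the element $\phi_n := (T_\gamma^n)_*([\eta]) \in \pi_1(S)$. I would first fix the closed geodesic representatives of $\gamma$ and $\eta$, and let $g, h \in \pi_1(S)$ denote the corresponding elements with axes $A$ and $B$ in $\widetilde{S}$. Since closed geodesics realize geometric intersection number in non-positive curvature, the axis $B$ crosses $k := i(\gamma, \eta)$ distinct $\pi_1(S)$-translates $A = A_1, \ldots, A_k$ of $A$ in a single period of $h$.

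Next, I would consider the closed geodesic representative $\bar{\zeta}_n$ of $T_\gamma^n \eta$ in $S$, which exists by non-positive curvature. Since $T_\gamma$ preserves intersection number with $\gamma$, the curve $\bar{\zeta}_n$ intersects $\gamma$ transversally in exactly $k$ points, so its lift $L_n \subset \widetilde{S}$ (the axis of $\phi_n$) crosses $k$ lifts of $\gamma$ in each period. Cutting $L_n$ at these crossings decomposes one period into $k$ geodesic arcs $\tilde{a}_1, \ldots, \tilde{a}_k$, whose lengths sum to $\ell_\rho(T_\gamma^n \eta)$.

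The crux of the argument is to show $\sum_{i=1}^k \ell_{\widetilde{\rho}}(\tilde{a}_i) \geq n k \ell_\rho(\gamma) - C$. For each $i$, one compares $\tilde{a}_i$ with the corresponding arc $\tilde{\eta}_i$ of $B$. Both arcs connect two lifts of $\gamma$, but the endpoints of $\tilde{a}_i$ on these lifts are shifted, relative to those of $\tilde{\eta}_i$, by the $n$-fold wrapping introduced by the Dehn twist at each intersection with $\gamma$. The aggregate shift along the lifts of $\gamma$ amounts to $nk$ translation steps, each of length $\ell_\rho(\gamma)$. Using the triangle inequality in the CAT(0) space $\widetilde{S}$ together with the bound $\sum_i \ell_{\widetilde{\rho}}(\tilde{\eta}_i) = \ell_\rho(\eta)$ then yields the required estimate.

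The main obstacle is rigorously justifying the $n \ell_\rho(\gamma)$ endpoint shifts in general CAT(0) geometry. This is cleanest when phrased via the annular cover $\widehat{S}$ of $S$ corresponding to $\langle g \rangle$: in this cover the Dehn twist $T_\gamma^n$ lifts to an $n$-fold twist about the core geodesic $\hat{\gamma}$ (of length $\ell_\rho(\gamma)$), making the wrapping estimate geometrically transparent. Care is needed in the presence of flat regions where lifts of geodesics may be asymptotically parallel, but the specific structure of the lifts of simple closed geodesics in the CAT(0) cover of a closed surface of genus at least two allows the triangle-inequality estimate to go through.
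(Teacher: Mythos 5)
Your decomposition and your guiding intuition (each of the $k=i(\gamma,\eta)$ arcs of $T_\gamma^n\eta$ between consecutive crossings of $\gamma$ picks up $n$ extra wraps around $\gamma$ and should contribute roughly $n\,\ell_\rho(\gamma)$) match the paper's, but the step you yourself flag as ``the main obstacle'' is a genuine gap, and the triangle inequality in a CAT(0) space cannot close it. The quantity a triangle-inequality argument controls is the distance between the two lifts of $\gamma$ that the arc $\tilde a_i$ joins, and that distance does not grow with $n$ at all: if the axis of $\eta$ crosses consecutive lifts $\tilde\gamma_i$ and $\tilde\gamma_{i+1}^{(0)}$, then the axis of $T_\gamma^n\eta$ crosses $\tilde\gamma_i$ and $g_i^{\,n}\tilde\gamma_{i+1}^{(0)}$ with $g_i$ stabilizing $\tilde\gamma_i$, and since $g_i^{-n}$ preserves $\tilde\gamma_i$ we get $d(\tilde\gamma_i, g_i^{\,n}\tilde\gamma_{i+1}^{(0)})=d(\tilde\gamma_i,\tilde\gamma_{i+1}^{(0)})$, the ($n$-independent) length of the common perpendicular. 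So there are points on the two lifts at distance $O(1)$, and nothing in your argument pins down \emph{where} on these lifts the geodesic representative actually crosses; the assertion that its endpoints are ``shifted by $n$ translation steps of length $\ell_\rho(\gamma)$'' relative to those of $\tilde\eta_i$ is precisely what has to be proved and does not follow from the combinatorial shift of lifts. The annular-cover reformulation has the same problem: an arc with both endpoints free on the core geodesic and winding number $n$ is not forced to be long by the triangle inequality alone, because the winding is measured relative to endpoints you do not control. Note this failure has nothing to do with flat regions — the naive estimate already fails in $\H^2$; what is missing is a genuinely hyperbolic (thin-triangles) input.

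That input is exactly what the paper supplies. It does not touch the geodesic representative's intersection pattern at all; instead it builds an explicit piecewise-geodesic loop $\omega_n$ homotopic to $T_\gamma^n\eta$, alternating arcs $A_i$ perpendicular to $\gamma$ with arcs $B_{i,n}$ running along $\gamma$ of length at least $n\,\ell_\rho(\gamma)-c$. For $n$ large the lift of $\omega_n$ to $\widetilde S$ is a ``stairstep path'' (orthogonal corners of alternating orientation), and the key lemma — proved using that $\widetilde S$ is Gromov $\delta$-hyperbolic, which follows from the \v{S}varc--Milnor lemma and hyperbolicity of $\pi_1(S)$ for genus at least two, not from the CAT(0) condition — shows such a path is a quasi-geodesic with purely additive error: its endpoints are at distance at least the total length minus $4\delta$ per corner. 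Since a lift of the closed geodesic representative of $T_\gamma^n\eta$ joins the same endpoints, its length is at least $\sum_i\bigl(l_\rho(A_i)+n\,\ell_\rho(\gamma)-c\bigr)-4k\delta$, which is the lemma. To repair your proof you would need some equivalent of this quantitative statement (for instance, a lower bound on the length of a three-segment stairstep arc perpendicular--along--perpendicular in its relative homotopy class); convexity of the distance function and the $1$-Lipschitz projection onto the axis of $\gamma$ are not enough, because the isometry $[\eta]$ does not commute with that projection.
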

Let $M = \widetilde{S}$ be the universal cover of $S$ equipped with the pullback metric. In our proof of \Cref{lem:dehntwistlengthestimate} we will use that $M$ is non-positively curved, both in a local sense and in a global sense. We will use \cite{BridsonHeafliger} as our reference for the facts on metric spaces of non-positive curvature that we will need. Because $\rho$ is a metric of non-positive curvature it follows that $M$ is a CAT(0) space (\cite[Section II.1]{BridsonHeafliger}). Moreover it is also a Gromov $\delta$-hyperbolic space (\cite[Section III.H.1]{BridsonHeafliger}) for some $\delta>0$ because, by the \v{S}varc-Milnor lemma, it is quasi-isometric to the Cayley graph of $\pi_1(S)$.

We first prove two auxiliary lemmas. For any two points $x,y\in M$ let us denote by $[x,y]$ the (directed) geodesic segment connecting $x$ to $y$. Furthermore, for $x,y,z\in M$ we denote by $\angle_z(x,y)$ the angle the geodesic segments $[x,z]$ and $[z,y]$ make at $z$.
\begin{lemma}\label{lem:obtuseangleestimate}
For all $x,y,z\in M$ with $\angle_z(x,y) \geq \pi/2$ we have
\[
d(x,y) \geq d(x,z) + d(y,z) - 4\delta.
\]
\end{lemma}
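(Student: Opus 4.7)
Set $p = \langle x, y\rangle_z := \tfrac{1}{2}(d(x,z) + d(y,z) - d(x,y))$, the Gromov product at $z$, so the inequality to prove is equivalent to $p \leq 2\delta$. The argument must combine the CAT(0) condition (to exploit the angle hypothesis) with Gromov hyperbolicity of $M$; neither property alone suffices, as witnessed by the Euclidean plane, which is CAT(0) but admits arbitrarily large $p$ in a right-angled triangle at $z$.

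I would introduce the internal points $z_x \in [z,x]$ and $z_y \in [z,y]$, both at distance $p$ from $z$, and prove matching lower and upper bounds on $d(z_x, z_y)$. For the lower bound, since $z_x$ and $z_y$ lie on the geodesics out of $z$ toward $x$ and $y$, the Alexandrov angle $\angle_z(z_x, z_y)$ coincides with $\angle_z(x,y) \geq \pi/2$. Applying the CAT(0) law of cosines to the triangle $(z, z_x, z_y)$ then yields
\[
d(z_x, z_y)^2 \geq p^2 + p^2 - 2p^2\cos\angle_z(z_x, z_y) \geq 2p^2,
\]
so $d(z_x, z_y) \geq p\sqrt{2}$. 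For the upper bound, I would appeal to the $\delta$-slim (Rips) property of the geodesic triangle $(x,y,z)$: tracing along $[z,x]$ from $z$, the closest side among $[z,y]$ and $[x,y]$ switches at some point, and this transition point lies within $\delta$ of both $[z,y]$ and $[x,y]$ simultaneously. A short chase with the triangle inequality, taking into account the parameters $d(z,u), d(x,v)$ of the nearby points $u \in [z,y], v \in [x,y]$, then pins down the switching point at distance $p + O(\delta)$ from $z$ and yields $d(z_x, z_y) \leq C\delta$ for a small absolute constant $C$.

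Combining the two estimates constrains $p$ to be $O(\delta)$, and careful bookkeeping of the constants produces the stated $2p \leq 4\delta$. The principal obstacle is arithmetic rather than conceptual: matching the precise constant $4$ requires care with the numerical factors in the thin-triangle estimate and with the convention of $\delta$-hyperbolicity used. Conceptually, the lemma is a clean synthesis of the CAT(0) Pythagoras-type inequality (pushing $d(z_x, z_y)$ upward via the angle assumption) and the thin-triangle property (pushing it downward), which together force $p$ to be bounded in terms of $\delta$.
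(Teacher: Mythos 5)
Your argument is essentially the paper's: both proofs place the internal points at distance $p=\tfrac12(d(x,z)+d(y,z)-d(x,y))$ from $z$ on $[z,x]$ and $[z,y]$, use the CAT(0) comparison at the (at least) right angle to force $d(z_x,z_y)\geq p\sqrt{2}$, and use hyperbolicity to bound $d(z_x,z_y)$ above by a multiple of $\delta$, whence $p=O(\delta)$. The only place you diverge --- and the only place any real work or risk remains --- is the upper bound: you propose to extract it from the $\delta$-slim condition by a parameter chase along the sides, whereas the paper simply invokes the $\delta$-thin (insize) formulation of hyperbolicity (Bridson--Haefliger, Definition III.H.1.16), under which the three internal points have pairwise distances at most $\delta$ by definition; then $p\sqrt{2}\leq\delta$ gives $p\leq\delta/\sqrt{2}$, comfortably below the $2\delta$ you need, with no bookkeeping at all. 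If you insist on starting from the slim condition you will pick up the standard conversion factor between the two definitions (slimness $\delta$ yields insize roughly $4\delta$), landing at $p\leq 2\sqrt{2}\,\delta$ and hence a worse constant than $4\delta$ in the conclusion; but since the lemma is only ever applied with an unspecified multiple of $\delta$ absorbed into a constant $C$, this discrepancy is harmless --- just state the hyperbolicity convention you use and the sketched chase becomes unnecessary.
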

\begin{proof}
Because $M$ is Gromov $\delta$-hyperbolic, it follows that the triangle with vertices $x,y,z$ is $\delta$-thin (see \cite[Definition III.1.16]{BridsonHeafliger}) and hence there exist points $w_{x,y}\in [x,y], w_{x,z}\in [x,z], w_{y,z}\in [y,z]$ such that $\text{diam}(\{w_{x,y}, w_{x,z}, w_{y,z}\}) \leq \delta$. We compare the triangle with vertices $w_{x,z}, w_{y,z}, z$ to a triangle in the Euclidean plane with vertices $a,b,c$ that satisfy $d(a,c) = d(w_{x,z}, z), d(b,c) = d(w_{y,z}, z)$ and $\angle_c(a,b) = \angle_z(w_{x,z},w_{y,z})=\angle_z(x,y)\geq \pi/2$. From the CAT(0) condition follows (see \cite[Proposition II.1.7(5)]{BridsonHeafliger}) that
\[
\delta \geq d(w_{x,z}, w_{y,z}) \geq d(a,b) \geq \sqrt{d^2(w_{x,z},z)+d^2(w_{y,z}, z)}.
\]
From this we conclude that that $d(z, w_{x,z}) \leq \delta$. The triangle inequality then yields that
\[
d(w_{x,y}, z) \leq d(w_{x,y}, w_{x,z}) + d(w_{x,z}, z) \leq 2\delta.
\]
Using again the triangle inequality now gives
\begin{align*}
d(x,y) &= d(x,w_{x,y}) + d(w_{x,y},y) \geq d(x,z)-d(w_{x,y}, z) + d(y,z) - d(w_{x,y},z)\\
&\geq d(x,z) + d(y,z) - 4\delta.
\end{align*}
\end{proof}
Consider three points $x,y,z\in M$ and let $\gamma_{x,y}\colon [0,1]\to M$ be a parametrization of $[x,y]$ with $\gamma_{x,y}(0) = x$ and $\gamma_{x,y}(1) = y$. Similarly let $\gamma_{y,z}$ be a parametrization of $[y,z]$. We say the angle that $[x,y]$ and $[y,z]$ make at $y$ is positively oriented if $(\dot{\gamma}_{x,y}(1), \dot{\gamma}_{y,z}(0))$ is a positively oriented frame of $T_{y}M$ (recall that $S$ is oriented and hence also $M$). We say it is negatively oriented otherwise.

Consider a continuous path consisting of a concatination of geodesic segments $[x_0, x_1], [x_1, x_2], \hdots, [x_{n-1}, x_n]$ with pairwise distinct points $x_i\in M$. We call such a path a \textit{stairstep path} if all successive segments meet each other orthogonally and the orientation of the angle between segments at points $x_i$ is alternately positive and negative. So either each angle at even numbered points is positively oriented and negatively oriented at odd numbered points or it is the other way around.
\begin{lemma}\label{lem:lengthestimatestairsteppath}
If the segments $[x_0, x_1], [x_1, x_2], \hdots, [x_{n-1}, x_n]$ form a stairstep path, then
\[
d(x_0,x_n) \geq \sum_{i = 0}^n d(x_i, x_{i+1}) - 4(n-1)\delta.
\]
\end{lemma}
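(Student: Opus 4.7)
I would prove the estimate by induction on the number $n$ of segments in the stairstep path. For $n=1$ there is nothing to show; for $n=2$ the conclusion is exactly \Cref{lem:obtuseangleestimate} applied to the triangle $(x_0,x_1,x_2)$, since the stairstep condition forces $\angle_{x_1}(x_0,x_2)=\pi/2$.

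For the inductive step, with $n\geq 3$, the natural strategy is to apply \Cref{lem:obtuseangleestimate} to the triangle with vertices $x_0, x_{n-1}, x_n$. This would give $d(x_0,x_n)\geq d(x_0,x_{n-1})+d(x_{n-1},x_n)-4\delta$, and combining with the inductive hypothesis applied to the sub-stairstep $[x_0,x_1],\ldots,[x_{n-2},x_{n-1}]$ (which has $n-1$ segments and is again a stairstep path) would yield the desired inequality, with the $-4\delta$ terms accumulating to $-4(n-1)\delta$. For this to succeed one needs to verify the obtuse-angle hypothesis $\angle_{x_{n-1}}(x_0,x_n)\geq \pi/2$.

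The main obstacle is establishing this angle inequality, since \emph{a priori} only $\angle_{x_{n-1}}(x_{n-2},x_n)=\pi/2$ is known. What must be shown is that the geodesic $[x_{n-1},x_0]$ leaves $x_{n-1}$ in the closed half of $T_{x_{n-1}}M$ opposite to $\dot{\gamma}_{x_{n-1},x_n}(0)$, equivalently on the same side of the perpendicular as $\dot{\gamma}_{x_{n-1},x_{n-2}}(0)$. Here the two standing hypotheses become crucial: $M$ is the universal cover of an oriented surface, so $T_{x_{n-1}}M$ is two-dimensional and oriented; and the alternating orientation of the right-angle turns along the stairstep prevents the path from folding back on itself. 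I would formalize this by strengthening the inductive hypothesis to additionally record the side of the perpendicular on which the chord to $x_0$ departs $x_{n-1}$, and propagate this information through the induction using the ``vertex close to opposite side'' conclusion already contained in the proof of \Cref{lem:obtuseangleestimate} (namely, $d(w_{xy},z)\leq 2\delta$ for the apex $z$ of a right-angled triangle). Making this planar/orientation argument precise in a general CAT(0) surface, where tangent vectors cannot be freely rotated as in the Euclidean model, is the delicate step I expect to require the most care; once it is in place the accumulation of the $-4\delta$ error term is automatic.
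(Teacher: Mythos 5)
Your overall architecture is the same as the paper's: induction on the number of segments, with the base case $n=2$ and the inductive step both reduced to \Cref{lem:obtuseangleestimate}, and you correctly isolate the one nontrivial point, namely that $\angle_{x_{n-1}}(x_0,x_n)\geq \pi/2$. However, that angle claim is the entire content of the lemma beyond bookkeeping, and your proposal does not actually prove it; moreover, the mechanism you sketch for it is unlikely to work. The ``vertex close to opposite side'' estimate $d(w_{x,y},z)\leq 2\delta$ from the proof of \Cref{lem:obtuseangleestimate} only tells you that the chord $[x_0,x_{n-1}]$ passes within $2\delta$ of the intermediate vertices; since those vertices lie \emph{on} the geodesic lines whose sides you need to control, a coarse $2\delta$-closeness statement carries no information about which side of such a line the chord departs on. Exact half-plane containment cannot be extracted from $\delta$-hyperbolicity alone, so ``propagating the side of the perpendicular'' through the induction by this route has no foundation.

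The paper's resolution is an exact, $\delta$-free separation argument, and it is worth contrasting with your sketch. Let $L_i$ be the complete geodesic containing $[x_i,x_{i+1}]$. The segment $[x_{i+1},x_{i+2}]$ meets both $L_i$ and $L_{i+2}$ orthogonally, so by convexity of the distance function in a CAT(0) space it is the unique shortest connection between these two lines; since its length is positive, $L_i$ and $L_{i+2}$ are disjoint. Each $L_i$ then separates the surface $M$ into two half-planes, and the alternating orientation of the right-angle turns, combined with the disjointness of $L_i$ and $L_{i+2}$ (which nests the relevant half-planes), gives by a descending induction that $x_0$ lies on the opposite side of $L_{n-2}$ from $x_n$. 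Convexity of geodesics then forces the chord $[x_{n-1},x_0]$ to remain in the closed half-plane bounded by $L_{n-2}$ not containing $x_n$, and since $[x_{n-1},x_n]$ is perpendicular to $L_{n-2}$ at $x_{n-1}$ this yields $\angle_{x_{n-1}}(x_0,x_n)\geq\pi/2$. You should replace your propagation-by-$2\delta$ idea with a separation argument of this kind; as written, the key step of your proof is a gap.
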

\begin{proof}
For $i=0,\hdots, n-1$ let $L_i$ be the geodesic in $M$ that contains the segment $[x_i,x_{i+1}]$. A pair of geodesics $L_i$, $L_{i+2}$ is connected by a segment $[x_{i+1}, x_{i+2}]$ that meets both geodesics orthogonally. It follows from convexity of the distance function that this is the unique geodesic segment that realises the shortest path between $L_i$ and $L_{i+2}$. Because we assumed that the points $x_i$ are pairwise distinct it follows that $L_i$ and $L_{i+2}$ are a positive distance apart. In particular, they do not intersect.

Each $L_i$ divides the manifold $M$ into two halves. For $i=0,\hdots, n-2$ let $H_i$ be the component of $M-L_i$ that contains $x_n$. From the assumption that successive angles have opposite orientation it follows that $x_n$ and $x_{n-3}$ lie on opposite sides of $L_{n-2}$ and hence $x_{n-3}\not\in H_{n-2}$. Because the segment $[x_{n-4}, x_{n-3}]$ is contained in $L_{n-2}$ which is disjoint from $L_{n-2}$, we also have $x_{n-4}\not\in H_{n-2}$. We claim the same holds for $x_{n-5}$. Since $L_{n-4}$ and $L_{n-2}$ do not intersect, it follows that $L_{n-2}\cup H_{n-2} \subset H_{n-4}$. Note that $x_{n-2}\in L_{n-2} \subset H_{n-4}$. Using again the assumption that successive angles have opposite orientation we find that $x_{n-2}$ and $x_{n-5}$ lie on opposite sides of $L_{n-4}$, hence we must have $x_{n-5}\not\in H_{n-4}$. Because $H_{n-2}\subset H_{n-4}$ we conclude that in particular $x_{n-5}\not\in H_{n-2}$. Continuing this argument inductively we find that $x_0\not\in H_{n-2}$ or, in other words, $x_0$ and $x_n$ lie on opposite sides of $L_{n-2}$.

We now prove the lemma by induction on $n$, the number of segments. For $n=1$ the statement is trivial and for $n=2$ it follows directly from \Cref{lem:obtuseangleestimate}. Assume the lemma holds for some $n\geq 2$. Consider a stairstep path $[x_0,x_1],\hdots, [x_n, x_{n+1}]$ consisting of $n+1$ segments. Let $L_{n-1}$ as defined above. Then the segments $[x_0,x_n]$ and $[x_n,x_{n+1}]$ lie on opposite sides of $L_{n-1}$ and meet at $x_n\in L_{n-1}$. Because the segment $[x_n, x_{n-1}]$ is orthogonal to $L_{n-1}$, it follows that $\angle_{x_n}(x_0, x_{n+1})\geq \pi/2$. We apply \Cref{lem:obtuseangleestimate} to find
\begin{align*}
d(x_0, x_{n+1}) &\geq d(x_0, x_n) + d(x_n, x_{n+1}) - 4\delta\\&\geq \sum_{i = 0}^n d(x_i, x_{i+1}) + d(x_n, x_{n+1}) - 4(n-1)\delta - 4\delta\\
&= \sum_{i=0}^{n+1} d(x_i,x_{i+1}) - 4n \delta.
\end{align*}
Here the second inequality follows from the induction assumption. We see that the lemma also holds for paths consisting of $n+1$ segments. This concludes the argument.
\end{proof}

\begin{proof}[Proof of \Cref{lem:dehntwistlengthestimate}.]
The statement is trivial if $i(\gamma,\eta) = 0$. Hence from now on we assume that $i(\gamma,\eta)>0$. Take $\gamma$ and $\eta$ to be geodesic representatives in $(S,\rho)$ of their free homotopy class. These loops realise the minimal number of intersections so $k := i(\gamma,\eta) = \abs{\gamma\cap \eta}$. We label the intersection points $\gamma\cap \eta = \{p_1,\hdots, p_k\}$ in order of appearance along some parametrization of $\eta$. Cut $\eta$ into $k$ pieces $\eta^1,\hdots, \eta^k$, where each $\eta^i$ is the subarc connecting $p_i$ to $p_{i+1}$ (and $\eta^k$ connects $p_k$ to $p_1$).

For each $i=1,\hdots, k$ let $A_i$ be the geodesic arc of minimal length in the homotopy class of $\eta^i$ with endpoints sliding freely over $\gamma$. Each arc $A_i$ meets $\gamma$ orthogonally because it is length minimizing. The loop $\eta$ is homotopic to a unique loop $\omega_0$ consisting of a concatination of geodesic arcs 
\[
A_1, B_{1,0}, A_2, B_{2,0}, \hdots, A_{k}, B_{k,0}
\]
where each $B_{i,0}$ is an arc that lies along the geodesic $\gamma$. Similarly, the Dehn twisted loops $T^n_\gamma\eta$ are homotopic to a unique loop $\omega_n$ consisting of segments $A_1, B_{1,n}, \hdots, A_{k}, B_{k,n}$. Each $B_{i,n}$ differs from $B_{i,0}$ by $n$ turns around $\gamma$.

After untwisting any turns that $\eta$ made around $\gamma$ in the opposite direction of the Dehn twist we find that for $n$ high enough the angle between each $A_i$ and $B_{i,n}$ is positively oriented and the angle between each $B_{i,n}$ and $A_{i+1}$ is negatively oriented. It follows that if we lift $\omega_n$ to $M$ it is a stairstep path. We also see there exists a constant $c>0$ such that $l_\rho(B_{i,n}) \geq n\cdot \ell_\rho(\gamma) - c$ for all $i=1,\hdots, k$ and $n\geq 1$.

Consider the geodesic representatives $\eta_n$ of the homotopy classes $T^n_\gamma\eta$. Because for $n$ high enough the arc $B_{1,n}$ winds around $\gamma$ at least once, it follows that $\eta_n$ and $\omega_n$ intersect at least once. Parametrize $\eta_n \colon [0,1]\to S$ to start at such an intersection point and consider a lift $\widetilde{\eta}_n$ to $M$. The endpoints of $\widetilde{\eta}_n$ are connected by the stairstep path that is a lift of $\omega_n$. We use \Cref{lem:lengthestimatestairsteppath} to conclude that
\begin{align*}
\ell_\rho(\eta) &= d(\widetilde{\eta}_n(0), \widetilde{\eta}_n(1)) \geq \sum_{i=0}^k (l_\rho(A_i) + l_\rho(B_i)) - 4k \delta\\&\geq n \cdot k \cdot \ell_\rho(\gamma) - (4\delta + c)\cdot k
\\&= n \cdot i(\gamma,\eta) \cdot \ell_\rho(\gamma) - C
\end{align*}
where we take $C= (4\delta+c)\cdot k$.
\end{proof}

\subsection{Conformal geometry of surfaces}\label{subsec:conformalgeometry}

In this section we will consider some of the conformal aspects of the geometry of a closed surface. We let $X$ be a closed Riemann surface.

\begin{definition}
Let $\gamma \subset X$ be a closed curve. We define the extremal length of $\gamma$ in $X$ to be
\begin{align}\label{eq:analyticdefextremallength}
E_X(\gamma) = \sup_{\sigma} \frac{\ell_\sigma^2(\gamma)}{\Area(\sigma)}.
\end{align}
Here the supremum runs over all metrics in the conformal class determined by $X$.
\end{definition}
In case $\gamma$ is a simple closed curve a second equivalent definition for its extremal length exists. We will denote the modulus of an annulus $A\subset X$ by $M(A)$.
\begin{definition}
If $\gamma\subset X$ is a simple closed curve, then
\begin{align}\label{eq:geometricdefextremallength}
E_X(\gamma) = \inf_{A} \frac{1}{M(A)}
\end{align}
where the infimum runs over all annuli in $X$ whose core curve is homotopic to $\gamma$.
\end{definition}
When $\gamma$ is a simple closed curve, then the metric realising the supremum in \Cref{eq:analyticdefextremallength} and the annulus realising the infimum in \Cref{eq:geometricdefextremallength} can be explicitly described. In order to do this we need to consider Strebel differentials on $X$ which we will describe here. We refer to \cite{Strebel} as a reference on Strebel differentials and quadratic differentials in general.

A \textit{quadratic differential} $\phi$ on $X$ is a differential that in any local coordinates can be written as $\phi = \phi(z) dz^2$ with $\phi(z)$ a holomorphic function. A quadratic differential determines two singular foliations of $X$. Namely, away from the zeroes of $\phi$, lines that have tangent directions $v\in TS$ with $\phi(v,v)>0$ form a foliation called the \textit{horizontal foliation} of $\phi$ and lines with $\phi(v,v)<0$ form its \textit{vertical foliation}. The leaves of these foliations are called \textit{singular} if they terminate in a zero of $\phi$ and are called \textit{non-singular} otherwise. Furthermore, a quadratic differential also determines a flat singular metric on $S$ which can be expressed as $\abs{\phi(z)}\abs{dz}^2$ in local coordinates. Around any point on $S$ that is not a zero of $\phi$ there exist complex coordinates in which $\phi = dz^2$. In these coordinates the singular flat metric is simply the Euclidean metric $\abs{dz}^2$, the horizontal foliation consists of the lines with constant $\Imag z$ and the vertical foliation consists of the lines with constant $\Real z$.

For every simple closed curve $\gamma\subset X$ there exists a unique quadratic differential, called the \textit{Strebel differential}, such that every non-singular leaf of the horizontal foliation of the differential is closed and homotopic to $\gamma$. The annulus obtained by taking the union of these non-singular leaves realises the infimum in \Cref{eq:geometricdefextremallength}. The singular flat metric that is determined by the Strebel differential realises the supremum in \Cref{eq:analyticdefextremallength}.

We will prove here some results on the extremal length of intersecting curves that we will need in our proofs below.

\begin{lemma}
Let $\gamma,\eta \subset X$ be simple closed curves. Then
\begin{align}\label{eq:collarlemma}
E_X(\gamma) E_X(\eta) \geq i(\gamma,\eta)^2.
\end{align}
\end{lemma}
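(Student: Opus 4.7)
The plan is to prove the inequality by the classical length-area method, using the analytic definition~\eqref{eq:analyticdefextremallength} of extremal length together with the Strebel differential description of the extremal metric for $\gamma$ recalled just before the statement. The case $i(\gamma,\eta)=0$ is trivial, so assume $i(\gamma,\eta)>0$.

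First, I take the Strebel differential $\phi_\gamma$ on $X$ associated to $\gamma$ and consider the singular flat metric $\sigma_\gamma = \abs{\phi_\gamma(z)}\abs{dz}^2$ it induces. By the facts recalled above, the union $A_\gamma$ of non-singular horizontal leaves of $\phi_\gamma$ is an annulus whose core is freely homotopic to $\gamma$, the metric $\sigma_\gamma$ realises the supremum in~\eqref{eq:analyticdefextremallength}, and $A_\gamma$ realises the infimum in~\eqref{eq:geometricdefextremallength}. In the $\phi_\gamma$-natural coordinates, $A_\gamma$ is a Euclidean cylinder with circumference $c_\gamma = \ell_{\sigma_\gamma}(\gamma)$ and some height $h_\gamma$; its modulus is $h_\gamma/c_\gamma = 1/E_X(\gamma)$, and combining this with the extremality identity $E_X(\gamma) = c_\gamma^2/\Area(\sigma_\gamma)$ gives $\Area(\sigma_\gamma) = c_\gamma h_\gamma$.

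Next, I bound $\ell_{\sigma_\gamma}(\eta)$ from below in terms of the intersection number. Choose a representative of $\eta$ that meets the core curve of $A_\gamma$ transversally in exactly $i(\gamma,\eta)$ points (for instance, a $\sigma_\gamma$-geodesic representative, after a small perturbation if necessary). Each such intersection yields an arc of $\eta$ that crosses the flat cylinder $A_\gamma$ from one boundary component to the other, so by the product structure of $A_\gamma$ in $\sigma_\gamma$ each such arc has $\sigma_\gamma$-length at least $h_\gamma$. Summing over the $i(\gamma,\eta)$ crossings gives
\[
\ell_{\sigma_\gamma}(\eta) \;\geq\; i(\gamma,\eta)\cdot h_\gamma.
\]

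Finally, I apply~\eqref{eq:analyticdefextremallength} to $\eta$ with the test metric $\sigma_\gamma$ and plug in the two bounds above:
\[
E_X(\eta) \;\geq\; \frac{\ell_{\sigma_\gamma}(\eta)^2}{\Area(\sigma_\gamma)} \;\geq\; \frac{i(\gamma,\eta)^2 \, h_\gamma^2}{c_\gamma h_\gamma} \;=\; \frac{i(\gamma,\eta)^2}{E_X(\gamma)},
\]
which rearranges to~\eqref{eq:collarlemma}. The main delicate point is the identity $\Area(\sigma_\gamma) = c_\gamma h_\gamma$, i.e.\ the fact that the Strebel annulus exhausts the $\sigma_\gamma$-area of $X$ (equivalently, that the critical graph has measure zero); this follows from the extremality of $\sigma_\gamma$ as indicated. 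A secondary technical step is justifying the lower bound $h_\gamma$ for each cylinder crossing, which is straightforward from the flat product structure once a transverse representative of $\eta$ has been chosen.
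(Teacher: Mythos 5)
Your proof is correct and follows essentially the same length--area argument as the paper: test the analytic definition of $E_X(\eta)$ against the singular flat Strebel metric for $\gamma$ and count crossings of the Strebel cylinder. The only point you treat more explicitly is the identity $\Area(\sigma_\gamma)=c_\gamma h_\gamma$ (the paper simply takes the area of the normalised metric to equal the modulus $M$), and deriving it from the two extremality characterisations of the Strebel metric, as you do, is a clean way to avoid invoking the measure-zero critical graph directly.
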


\begin{proof}
Consider the Strebel differential of $\gamma$ on $X$. Let $A\subset X$ be the annulus consisting of the union of all non-singular leaves of its horizontal foliation. Then we have $M := M(A) = 1/E_X(\gamma)$. Consider on $X$ the singular flat metric $\sigma$ determined by the Strebel differential. Normalise such that the annulus $A$ has circumference 1 and height $M$. Any curve homotopic to $\eta$ crosses the annulus at least $i(\gamma,\eta)$ times and hence $\ell_\sigma(\eta) \geq i(\gamma,\eta) \cdot M$. Then from  \Cref{eq:analyticdefextremallength} we see that
\[
E_X(\eta) \geq \frac{\ell_\sigma(\eta)^2}{\Area(\sigma)} \geq \frac{i(\gamma,\eta)^2 M^2}{M} = \frac{1}{E_X(\gamma)} i(\gamma,\eta)^2.
\]
This proves the result.
\end{proof}

\begin{lemma}\label{lem:almostcollarequality}
Let $S$ be a surface of genus at least two and let $\gamma\subset S$ a simple closed curve. Then there exists a simple closed curve $\eta \subset S$, satisfying $i(\gamma,\eta) \in \{1, 2\}$, such that for every $\epsilon>0$ there exists a complex structure $X$ on $S$ with
\[
E_X(\gamma) E_X(\eta) \leq i(\gamma,\eta)^2 + \epsilon
\]
and
\[
1-\epsilon \leq E_X(\gamma) \leq 1+\epsilon.
\]
\end{lemma}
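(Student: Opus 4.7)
The construction of $\eta$ splits by whether $\gamma$ separates $S$. If $\gamma$ is non-separating, I would take $\eta$ to be any simple closed curve with $i(\gamma,\eta)=1$, which exists since $S\setminus\gamma$ is connected of genus $g-1\geq 1$. If $\gamma$ is separating, I would write $S=S_1\cup_\gamma S_2$ with genera $g_i\geq 1$ and take $\eta=\alpha_1\cup\alpha_2$ where each $\alpha_i\subset S_i$ is an essential simple arc from $\gamma$ to $\gamma$; then $i(\gamma,\eta)=2$. Set $k=i(\gamma,\eta)\in\{1,2\}$.

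To construct $X$ I would build an explicit Jenkins--Strebel flat structure on $S$ consisting of a flat cylinder $A_\gamma$ of circumference $1$ and height $1$ (hence of modulus $1$) with core $\gamma$, glued along its two boundary circles to a flat critical graph $\Gamma\subset S$ of total edge length $1$. The combinatorial gluing data can be arranged so that the resulting flat surface is homeomorphic to $S$ in the desired way, and so that the $k$ crossings of $\eta$ with $\gamma$ sit at equally spaced positions along the core with the geodesic representative of $\eta$ consisting of exactly $k$ vertical arcs in $A_\gamma$ meeting at matching points of $\Gamma$ (so that no $\Gamma$-travel is needed to close up $\eta$).

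With such $X$ the estimates are immediate. Since $A_\gamma$ is by construction the Strebel cylinder of the differential $\phi$ for $\gamma$, one has $E_X(\gamma)=1/M(A_\gamma)=1$. For $\eta$, the analytic definition of extremal length applied to the flat metric from $\phi$ gives $E_X(\eta)\geq \ell_\phi(\eta)^2/\Area(\phi)=k^2/1=k^2$, and for the matching upper bound the $2\delta$-tubular neighbourhood of $\eta$ in the flat metric (with $\delta<1/(2k)$) is an embedded annulus whose trace in $A_\gamma$ consists of $k$ disjoint vertical strips of width $2\delta$ and height $1$; this annulus has circumference $k$ and height $2\delta$, hence modulus $2\delta/k$, and letting $\delta\uparrow 1/(2k)$ gives $E_X(\eta)\leq k^2$. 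Therefore $E_X(\gamma)=1$ and $E_X(\gamma)E_X(\eta)=k^2=i(\gamma,\eta)^2$ exactly, so the stated $\epsilon$-bounds hold trivially.

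The main obstacle is the combinatorial construction of the Jenkins--Strebel flat structure realising the prescribed minimal geodesic for $\eta$: one must arrange the traversals of $\partial A_\gamma$ along $\Gamma$ so that the entry and exit points of the $k$ vertical arcs of $\eta$ match cyclically through $\Gamma$ without any $\Gamma$-travel, thereby making $\eta$ have flat length exactly $k$. The cases $k=1$ (with $\Gamma$ connected of Euler characteristic $2-2g$) and $k=2$ (with $\Gamma=\Gamma_1\sqcup\Gamma_2$, one component on each side of $\gamma$) each admit such an explicit gluing by flat rectangles.
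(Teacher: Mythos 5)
There is a genuine gap, and it is located precisely at the step ``letting $\delta\uparrow 1/(2k)$ gives $E_X(\eta)\leq k^2$.'' Your construction aims to produce a \emph{single} complex structure $X$ with $E_X(\gamma)=1$ and $E_X(\gamma)E_X(\eta)=i(\gamma,\eta)^2$ exactly, which would make the $\epsilon$ in the statement superfluous. But equality in \Cref{eq:collarlemma} cannot be achieved here: equality forces the supremum in the analytic definition of $E_X(\eta)$ to be attained by the Strebel metric $\abs{\phi_\gamma}$ of $\gamma$, i.e.\ forces $\abs{\phi_\eta}$ to be a constant multiple of $\abs{\phi_\gamma}$, so that the single horizontal cylinder of $\phi_\gamma$ is simultaneously (after rotating the foliation) the single characteristic cylinder for $\eta$. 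Tracing what this means for the boundary identifications of your cylinder $A_\gamma$: every vertical trajectory would have to close up after exactly $k$ crossings, which for $k=1$ forces the top circle to be glued to the bottom circle by the identity (a torus), and for $k=2$ forces each boundary circle to be glued to itself by a single reflection or rotation, producing a disk or a cross-cap on each side of $\gamma$. Neither is possible on an orientable surface of genus at least two with $\gamma$ essential. Concretely, your tubular neighbourhood of $\eta$ stops being an embedded annulus at some radius $\delta_0$ strictly less than $1/(2k)$: as the $k$ vertical strips widen they must run into the vertices and the changes of gluing pattern on the critical graph $\Gamma$ (which is forced to be a nontrivial graph by the genus), and past that point the strips no longer match up end to end. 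You therefore only obtain $E_X(\eta)\leq k/(2\delta_0)$ with $\delta_0<1/(2k)$, i.e.\ a bound strictly worse than $k^2$, and with no control on how much worse.

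The missing idea is that the construction must depend on $\epsilon$: one has to make the ``defect'' of the flat structure small. This is what the paper's proof does. It starts from a model surface on which equality genuinely holds --- a flat torus with $i(\gamma',\eta')=1$ in the non-separating case, and a unit square glued into a sphere with $i(\gamma',\eta')=2$ in the separating case --- and then attaches the missing genus by gluing auxiliary Riemann surfaces along slits of length $\delta$ placed away from $\gamma'$ and $\eta'$. The annuli of definite modulus around $\gamma'$ and $\eta'$ in the model survive the surgery as long as they avoid the slits, giving $E_X(\gamma')\leq 1/(1-2\delta)$ and $E_X(\eta')\leq 1/(1/4-\delta)$, whence $E_X(\gamma')E_X(\eta')\leq i^2+\epsilon$ for $\delta$ small. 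Your skeleton could be repaired in the same spirit by concentrating the critical graph of your Jenkins--Strebel structure in an arc of length $\delta$ on each boundary circle of $A_\gamma$, but some such $\epsilon$-dependent degeneration is unavoidable; the clean ``equality case'' you propose does not exist in genus at least two.
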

\begin{proof}
We construct the complex structure on $S$ by cutting and pasting together several pieces. The main idea is to start with a smaller Riemann surface and curves $\gamma,\eta$ for which \Cref{eq:collarlemma} is an equality. Then we add pieces to this surface to make it of the same topological type as $S$ in a way that does not disturb the quantity $E_X(\gamma)E_X(\eta)$ to much.

For our construction we need to distinguish between two cases, namely whether $\gamma$ is a separating curve or not. We will start with the case that $\gamma$ is separating which is the more complicated case. The curve $\gamma$ separates $S$ into two surfaces $S', S''$ with border. Denote by $g', g''\geq 1$ their respective genus. Then the genus of $S$ equals $g=g'+g''$.

We start by considering a square with side lengths 1 in $\C$. We glue the boundary according to the gluing pattern given in \Cref{fig:gluingpattern} to obtain the 2-sphere. We denote by $X_0$ the 2-sphere equipped with the complex structure determined by this gluing. We consider two simple closed loops $\gamma'$ and $\eta'$ on the sphere as specified in \Cref{fig:gluingpattern}. Fix a small constant $\delta>0$. In each of the four components of the complement of $\gamma'\cup \eta'$ we cut a slit of length $\delta$ at the locations as indicated in \Cref{fig:gluingpattern} (the slits are marked by (I) through (IV)). We let $X'$ be an arbitrary closed Riemann surface of genus $g'-1$. At arbitrary points in $X'$ we cut two slits. We glue one of these slits to the slit marked (I) in $X_0$. The other slit we glue to the slit marked (II). Similarly, we take $X''$ an arbitrary Riemann surface of genus $g''-1$, again cut two slits and glue $X''$ to $X_0$ by gluing one of these slits to the slit marked (III) and the other to the slit marked (IV).

\begin{figure}[ht]
\centering
{
	\resizebox{70mm}{!}
	{\Large
		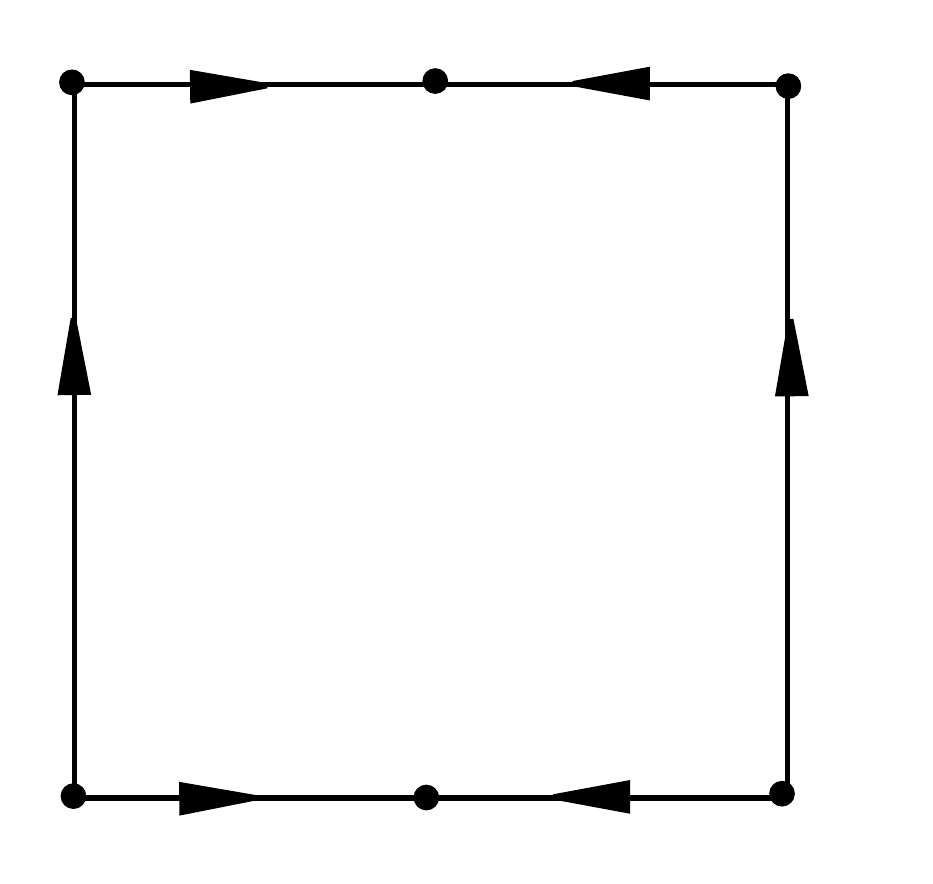
	}
}
\caption{A gluing pattern on the boundary of a square. Edges labelled with the same letter are glued together according to the orientation indicated by the arrows. We cut slits of length $\delta$ at the places indicated by (I) through (IV).}\label{fig:gluingpattern}
\end{figure}

We denote by $X = X_0 \sqcup X' \sqcup X'' /\sim$ the Riemann surface that is obtained from these gluings. Let us first make the observation that in $X$ the curves $\gamma'$ and $\eta'$ are no longer null homotopic (as they were on the sphere) and they satisfy $i(\gamma',\eta') = 2$. Secondly, we note that the genus of $X$ equals $g$. Namely, the combined genus of $X'$ and $X''$ contributes $g'+g''-2$ to the genus of $X$ and the fact that we glued each surface along two slits contributes $2$ more (see \Cref{fig:gluingoverview}).

\begin{figure}[ht]
\centering
{
	\resizebox{120mm}{!}
	{\huge
		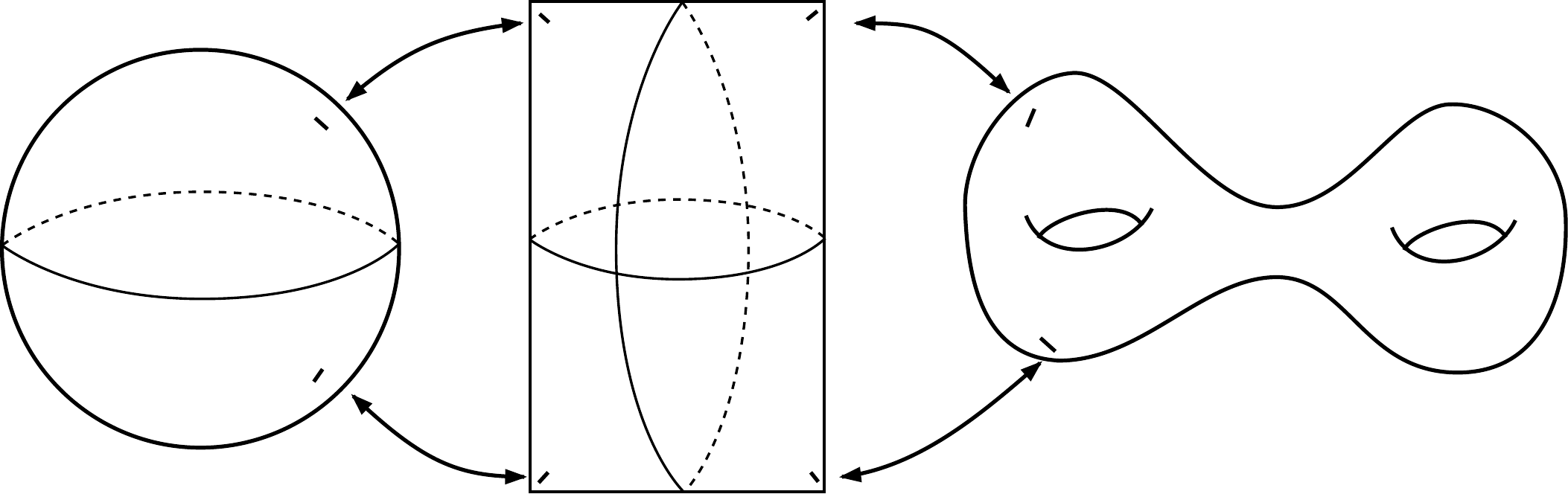
	}
}
\caption{Example of a gluing as described above with $g'-1=0$ and $g''-1=2$.}\label{fig:gluingoverview}
\end{figure}

Consider the square in $\C$ from which we glue $X_0$. We note that the $1/2-\delta$ neighbourhood of the curve $\gamma'$ in the square intersects no slits. This neighbourhood descends to an annulus in $X$ around $\gamma'$ that has modulus $1-2\delta$. From \Cref{eq:geometricdefextremallength} it follows that $E_X(\gamma') \leq 1/(1-2\delta)$. Similarly the $1/4-\delta$ neighbourhood of $\eta'$ in the square intersects no slits and descends to an annulus in $X$ around $\eta'$. Its modulus equals $1/4-\delta$ and hence $E_X(\eta') \leq 1/(1/4-\delta)$. We now see that for any $\epsilon>0$ there is a $\delta$ small enough such that
\[
E_X(\gamma')E_X(\eta') \leq \frac{1}{1-2\delta} \cdot \frac{1}{1/4-\delta} \leq 4 + \epsilon = i(\gamma,\eta)^2 + \epsilon
\]
and $E_X(\gamma') \leq 1+\epsilon$. For the lower bound on $E_X(\gamma')$ we combine \Cref{eq:collarlemma} with $E_X(\eta') \leq 1/(1/4-\delta)$ to find that also $E_X(\gamma')\geq 1-\epsilon$ for $\delta$ small enough.

Finally we note that $\gamma'$ separates $X$ into two surfaces with border that have genus $g'$ and $g''$ respectively. It follows from the classification of surfaces that these two subsurfaces are diffeomorphic to the two corresponding subsurfaces of $S$. By gluing these diffeomorphisms together we find that there exists a diffeomorpism between $X$ and $S$ that sends the homotopy class of $\gamma'$ to that of $\gamma$. We let $\eta$ be the simple closed curve in $S$ that corresponds to $\eta'$ under this diffeomorphism. We note that the homotopy class of $\eta$ only depends on the placement of the slits in $X_0$ we glued along and not on the constant $\delta$. Hence we can take $\eta$ the same for all choices of $\epsilon$. Using this diffeomorphism we equip $S$ with a complex structure that satisfies the bounds on the extremal length of $\gamma$ and $\eta$.

The case where $\gamma$ is non-separating is easier. In this case we take $X_0$ to be a torus and $\gamma'$ and $\eta'$ a pair of simple closed curves with $i(\gamma',\eta') = 1$. By picking a suitable complex structure on the torus we can realise equality in \Cref{eq:collarlemma} and $E_X(\gamma')=1$. We glue an arbitrary Riemann surface of genus $g-1$ to the torus along a single small slit to obtain a Riemann surface $X$ of genus $g$. Again by the classification of surfaces we can find a diffeomorphism between $X$ and $S$ that takes $\gamma'$ to $\gamma$. The estimate on the extremal lengths $\gamma$ and $\eta$ in this case is similar to the previous case.
\end{proof}

\subsection{Harmonic maps}\label{sec:harmonicmaps}
Let $(M,\sigma)$ and $(N,\rho)$ be Riemannian manifolds. Consider a Lipschitz continuous map $f \colon N \to N$. We define its \textit{energy density} $e(f) \colon M \to \R$ to be
\[
e(f) = \frac{1}{2} \norm{df}^2
\]
where the norm $\norm{\cdot}$ is the Hilbert-Schmidt norm on the vector bundle $T^*M\otimes f^*TN$ induced by the metrics $\sigma$ and $\rho$. The energy density is a pointwise measure of the amount of stretching that a map does. We note that as $f$ is Lipschitz continuous it is differentiable almost everywhere and hence $e(f)$ is defined almost everywhere. The \textit{Dirichlet energy} of $f$ is defined as
\[
\mathcal{E}(f) = \int_M e(f) \vol_\sigma.
\]
A critical point of this energy functional is called a \textit{harmonic map}. If $\sigma$ and $\rho$ are smooth Riemannian metrics, then a harmonic map is also smooth. 

A straightforward calculation shows that if $M$ is a surface, then the Dirichlet energy of a map is independent of conformal scalings of the metric $\sigma$. It follows that in this case the harmonicity of a map and its energy depend only on the conformal structure on the surface. If we want to stress the dependence of the energy on a complex structure $J$ on $M$ and the metric $\rho$ on $N$ we will write $e(f;J,\rho)$ for the energy density and $\mathcal{E}(f;J,\rho)$ for the Dirichlet energy of a map $f$.

We will make use of the following lemma by Minsky.
\begin{lemma}[{\cite[Proposition 3.1]{MinskyHarmonic}}]\label{lem:minskyestimate}
Let $X$ be a Riemann surface and $(N,\rho)$ be a Riemannian manifold. For any map $f \colon X \to (N,\rho)$ and any simple closed curve $\gamma\subset X$ we have
\[
\mathcal{E}(f) \geq \frac{1}{2} \frac{\ell_\rho^2(f \circ \gamma)}{E_X(\gamma)}.
\]
\end{lemma}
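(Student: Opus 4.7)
The plan is to use the geometric definition of extremal length (via moduli of annuli) together with conformal invariance of the Dirichlet energy on a surface, reducing everything to an elementary Fubini / Cauchy--Schwarz estimate on a flat cylinder.

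First, I would pick any annulus $A\subset X$ whose core is homotopic to $\gamma$, so that $E_X(\gamma)\leq 1/M(A)$. Writing $M = M(A)$, I would choose a conformal identification of $A$ with the flat Euclidean cylinder $[0,M]\times \R/\Z$ with coordinates $(t,s)$. Since $X$ is a surface, the energy density and the Dirichlet energy of $f$ depend only on the conformal class of the domain metric, so I may compute $\mathcal{E}(f|_A)$ using this flat metric. In these coordinates the Hilbert--Schmidt norm gives $2e(f) = \lvert\partial_t f\rvert_\rho^2 + \lvert\partial_s f\rvert_\rho^2$, in particular $\lvert\partial_s f\rvert_\rho^2 \leq 2 e(f)$ almost everywhere (recall $f$ is only Lipschitz, so this holds a.e., which is all that is needed).

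Next, for each $t\in[0,M]$ consider the horizontal circle $\gamma_t(s) = (t,s)$. Since $A$ is an annulus with core homotopic to $\gamma$, each $\gamma_t$ is freely homotopic to $\gamma$ in $X$, hence $f\circ\gamma_t$ is freely homotopic to $f\circ\gamma$ in $N$ and therefore $l_\rho(f\circ\gamma_t) \geq \ell_\rho(f\circ\gamma)$. For almost every $t$ the map $s\mapsto f(t,s)$ is absolutely continuous, so its length is given by $\int_0^1 \lvert\partial_s f\rvert_\rho\,ds$. By Cauchy--Schwarz on $[0,1]$,
\[
\ell_\rho(f\circ\gamma)^2 \;\leq\; \left(\int_0^1 \lvert\partial_s f\rvert_\rho\, ds\right)^2 \;\leq\; \int_0^1 \lvert\partial_s f\rvert_\rho^2\, ds.
\]
Integrating in $t$ over $[0,M]$, applying Fubini, and using $\lvert\partial_s f\rvert_\rho^2 \leq 2e(f)$ yields
\[
M \cdot \ell_\rho(f\circ\gamma)^2 \;\leq\; \int_0^M\!\!\int_0^1 \lvert\partial_s f\rvert_\rho^2\, ds\, dt \;\leq\; 2\int_A e(f)\,\vol \;\leq\; 2\,\mathcal{E}(f).
\]

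Finally, taking the supremum of $M(A)$ over all such annuli $A$ and using $\sup_A M(A) = 1/E_X(\gamma)$ (the geometric definition \Cref{eq:geometricdefextremallength}) gives the desired inequality $\mathcal{E}(f) \geq \frac{1}{2}\ell_\rho^2(f\circ\gamma)/E_X(\gamma)$. The only subtle point is the regularity bookkeeping: verifying that for a merely Lipschitz $f$ the slice maps $s\mapsto f(t,s)$ are absolutely continuous for almost every $t$, so that the length formula and Cauchy--Schwarz apply; this is a standard ACL-type argument for Lipschitz maps and is the only nontrivial technicality in the proof.
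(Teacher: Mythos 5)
Your argument is correct, and it is essentially the standard proof: the paper itself does not prove this lemma but cites it from Minsky, and your flat-cylinder computation (conformal invariance of energy, length lower bound on each horizontal circle, Cauchy--Schwarz, Fubini, then optimizing over annuli via the geometric definition of extremal length) is precisely the argument given there. The regularity point you flag is indeed the only technicality, and it is handled exactly as you describe for Lipschitz maps.
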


\section{The energy spectrum}\label{sec:theenergyspectrum}
In this section we introduce the energy spectrum of a Riemannian manifold and study its relation to the simple length spectrum. 

Let $S$ be a surface of genus at least two and let $(N,\rho)$ be a Riemannian manifold. We fix a homotopy class $[f]\in [S,N]$ of maps from $S$ to $N$. For every complex structure $J$ on $S$ we consider the quantity
\[
\Energy(J) = \inf_{h \in [f]} \mathcal{E}(h; J, \rho).
\]
Here the infimum is taken over all Lipschitz continuous maps in the homotopy class $[f]$. If $\phi \colon S \to S$ is a diffeomorphism, then $\phi \colon (S,\phi^*J) \to (S,J)$ is a biholomorphism. In particular we have $\mathcal{E}(h\circ \phi;\phi^*J,\rho) = \mathcal{E}(h;J,\rho)$. It follows that if $\phi$ is isotopic to the identity, then $\Energy(J) = \Energy(J\circ\phi)$ and we see that the function $\Energy$ descends to a well-defined function on Teichm\"uller space.
\begin{definition}\label{def:energyspectrum}
The energy spectrum of $(N,\rho)$ and $[f]$ is the function $\Energy \colon \TeichSpace(S) \to \R$ given by
\[
\Energy([J]) = \inf_{h\in [f]} \mathcal{E}(h;J,\rho)
\]
where the infimum is taken over all Lipschitz continuous maps in $[f]$.
\end{definition}
We will often suppress the dependence on a choice of the homotopy class $[f]$ and simply refer to the energy spectrum of $(N,\rho)$. 

The energy spectrum gives a rough measure of the compatibility between $(N,\rho)$ and points in Teichm\"uller space. Namely the quantity $\Energy([J])$ measures how much the complex surface $(S,J)$ must be stretched for it to be mapped into $(N,\rho)$.

\begin{proposition}
The energy spectrum $\Energy \colon \TeichSpace(S) \to \R$ is a continuous function on Teichm\"uller space.
\end{proposition}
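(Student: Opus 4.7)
Fix $[J_0] \in \TeichSpace(S)$. The plan is to show that $\Energy$ is both upper and lower semicontinuous at $[J_0]$. Given any sequence $[J_n] \to [J_0]$, I first lift it to a sequence of complex structures $J_n \to J_0$ on $S$ in, say, the $C^\infty$-topology; this is possible because the quotient map from the space of complex structures to $\TeichSpace(S)$ admits local sections.

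The core step is the following \emph{uniform} comparison lemma, which I would establish in local coordinates: for every $\epsilon > 0$ there is a neighborhood $U$ of $J_0$ in the space of complex structures such that, for all $J \in U$ and \emph{every} Lipschitz map $h\colon S \to N$,
\[
(1-\epsilon)\,\mathcal{E}(h;J_0,\rho) \;\leq\; \mathcal{E}(h;J,\rho) \;\leq\; (1+\epsilon)\,\mathcal{E}(h;J_0,\rho).
\]
For the proof, pick any Riemannian metric $g_J$ in the conformal class of $J$ and write the energy locally as
\[
\mathcal{E}(h;J,\rho) = \tfrac{1}{2}\int_S a_J^{ij}\,\langle \partial_i h,\partial_j h\rangle_\rho\,dx\,dy, \qquad a_J^{ij} := g_J^{ij}\sqrt{\det g_J}.
\]
The tensor $a_J$ is conformally invariant on surfaces, hence depends only on $J$, and it varies continuously with $J$ uniformly on the compact surface $S$. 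Thus for $J\in U$ we have $(1-\epsilon)\,a_{J_0} \leq a_J \leq (1+\epsilon)\,a_{J_0}$ as positive definite forms on $T^*S$. Since $\langle \partial_i h,\partial_j h\rangle_\rho$ is positive semidefinite in $(i,j)$ (writing $dh=\sum_a \omega_a\, e_a$ in an orthonormal frame $\{e_a\}$, it equals $\sum_a \omega_a\otimes\omega_a$), this pointwise bound transfers to the integrated energies, with an estimate independent of $h$.

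Upper semicontinuity follows immediately: pick $h\in[f]$ with $\mathcal{E}(h;J_0,\rho) \leq \Energy([J_0]) + \epsilon$ and apply the lemma to conclude $\Energy([J_n]) \leq \mathcal{E}(h;J_n,\rho) \leq (1+\epsilon_n)(\Energy([J_0])+\epsilon)$, where $\epsilon_n\to 0$. For lower semicontinuity, choose $h_n\in[f]$ with $\mathcal{E}(h_n;J_n,\rho) \leq \Energy([J_n]) + 1/n$; the lemma, with the roles of $J_0$ and $J_n$ interchanged, yields $\Energy([J_0]) \leq \mathcal{E}(h_n;J_0,\rho) \leq (1+\epsilon_n)(\Energy([J_n])+1/n)$, and taking the liminf gives $\Energy([J_0]) \leq \liminf_n \Energy([J_n])$.

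The main obstacle is the uniformity in $h$ in the comparison lemma; without it one only obtains upper semicontinuity (as the infimum of a family of continuous functions of $J$). The uniformity relies crucially on the conformal invariance of the Dirichlet energy on a surface, which cleanly separates the integrand into a tensor depending only on the complex structure (and varying continuously with it) and a positive semidefinite tensor depending only on the map, thereby permitting a pointwise multiplicative comparison that is valid for all $h$ simultaneously.
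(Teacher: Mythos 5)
Your proposal is correct and follows essentially the same strategy as the paper: both arguments hinge on a pointwise multiplicative comparison of energy densities for nearby conformal structures that is uniform in the map $h$ (the paper obtains it by bi-Lipschitz comparison and simultaneous diagonalization of two Riemannian metrics, you by the continuity of the conformally invariant tensor $a_J$), and then pass the two-sided bound through the infimum defining $\Energy$. The only cosmetic difference is that the paper represents the converging sequence in $\TeichSpace(S)$ by uniformly converging hyperbolic metrics rather than by a local section of the space of complex structures.
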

\begin{proof}
If $\sigma$ is a Riemannian metric on $S$ and $h\colon S \to N$ a Lipschitz continuous map, then the energy density of $h$ with respect to $\sigma$ is given, at a point $x\in S$ where $h$ is differentiable, by
\begin{equation}\label{eq:energydensity}
e(f;\sigma,\rho) = \frac{1}{2}\sum_{i=1}^2 \norm{dh(e_i)}^2_\rho 
\end{equation}
where $(e_1,e_2)$ is an orthonormal basis of $T_xS$ with respect to $\sigma$. If $\sigma'$ is a second Riemannian metric, then by compactness of $S$ there exists a Lipschitz constant $C(\sigma,\sigma')\geq 1$ such that
\[
\frac{\sigma(v,v)}{C(\sigma,\sigma')} \leq \sigma'(v,v) \leq C(\sigma,\sigma')\cdot \sigma(v,v) \text{ for all } v\in TS.
\]
For any $x\in S$ we can simultaneously diagonalise the metrics at $x$ to find a basis $(e_1,e_2)$ of $T_xS$ that is orthonormal for $\sigma$ and orthogonal for $\sigma'$. If we denote $\lambda_i = \sigma'(e_i,e_i)$, then $1/C(\sigma,\sigma') \leq \lambda_i \leq C(\sigma,\sigma')$. The basis $(e_1/\sqrt{\lambda_1}, e_2/\sqrt{\lambda_2})$ is orthonormal for $\sigma'$ and from the expression of the energy density given in \Cref{eq:energydensity} now follows that
\[
\frac{e(h;\sigma',\rho)}{C(\sigma,\sigma')} \leq e(h;\sigma, \rho) \leq C(\sigma, \sigma') \cdot e(h;\sigma', \rho).
\]
By integrating we see that similar inequalities hold true for $\mathcal{E}(h;\sigma,\rho)$ and $\mathcal{E}(h;\sigma',\rho)$. Then taking the infimum over all $h\colon S \to N$ Lipschitz continuous in the homotopy class $[f]$ gives
\begin{equation}\label{eq:energycomparison}
\frac{\Energy([\sigma'])}{C(\sigma,\sigma')} \leq \Energy([\sigma]) \leq C(\sigma,\sigma')\cdot \Energy([\sigma']).
\end{equation}
Now suppose $X_n$ is a sequence in Teichm\"uller space converging to a point $X\in \TeichSpace(S)$. The points $X_n$ and $X$ can be represented by hyperbolic metrics $\sigma_n$ and $\sigma$ such that $\sigma_n \to \sigma$ uniformly on $S$ as $n\to\infty$. It follows that the Lipschitz constants can be taken such that $C(\sigma_n,\sigma) \to 1$. Then \Cref{eq:energycomparison} gives that $\Energy(X_n) \to \Energy(X)$ for $n\to\infty$ and thus $\Energy\colon \TeichSpace(S) \to \R$ is indeed a continuous function.
\end{proof}

If we assume that for every complex structure there exists an energy minimizing harmonic map $f_J \colon (S,J) \to (N,\rho)$ in the homotopy class $[f]$, then $\Energy([J]) = \mathcal{E}(f_J; J,\rho)$. By the classical results of \cite{EellsSampson} this is for example the case if $(N,\rho)$ is compact and has non-positive curvature. If the harmonic maps $f_J$ are unique and satisfy certain non-degeneracy conditions, then they depend smoothly on the complex structure (see \cite{EellsLemaire}). This happens for example if $(N,\rho)$ is negatively curved and the map $f$ can not be homotoped into the image of a closed geodesic. In this case the energy spectrum $\Energy$ is a smooth map on Teichm\"uller space.

To state our main result we will restrict to the situation where $N = S$ is a surface of genus at least two, $[f] =[\id]$ and $\rho$ is a non-positively curved Riemannian metric on $S$. 

\begin{theorem}\label{thm:energyspectrumgiveslengthspectrum}
Let $\rho, \rho'$ be non-positively curved Riemannian metrics on a surface $S$ of genus at least two. If the energy spectra of $(S,\rho)$ and $(S,\rho')$ (with $[f]=[\id]$) coincide, then the simple length spectra of $\rho$ and $\rho'$ coincide.
\end{theorem}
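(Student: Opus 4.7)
The plan is to show, for each simple closed curve $\gamma \subset S$, that $\ell_\rho(\gamma) = \ell_{\rho'}(\gamma)$; the common value will be recovered as a squeezed asymptotic of $\Energy_\rho$ (equivalently $\Energy_{\rho'}$) along a specific sequence in $\TeichSpace(S)$. Fix $\gamma$; invoke \Cref{lem:almostcollarequality} to choose $\eta$ with $i(\gamma,\eta) \in \{1,2\}$ and, for every small $\epsilon > 0$, a point $X_\epsilon \in \TeichSpace(S)$ with $E_{X_\epsilon}(\gamma) \in [1-\epsilon, 1+\epsilon]$ and $E_{X_\epsilon}(\gamma) E_{X_\epsilon}(\eta) \leq i(\gamma,\eta)^2 + \epsilon$. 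Set $X_\epsilon^{(n)} := T_\gamma^n \cdot X_\epsilon$. Since $T_\gamma^n$ is a biholomorphism between the Riemann surfaces underlying $X_\epsilon$ and $X_\epsilon^{(n)}$, one has $E_{X_\epsilon^{(n)}}(T_\gamma^n \eta) = E_{X_\epsilon}(\eta)$, and an energy-preserving bijection between the competing maps in $[\id]$ out of $X_\epsilon^{(n)}$ and those in $[T_\gamma^n]$ out of $X_\epsilon$.

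For the lower bound, apply Minsky's estimate (\Cref{lem:minskyestimate}) at $X_\epsilon^{(n)}$ with the curve $T_\gamma^n \eta$ and then the Dehn twist length estimate (\Cref{lem:dehntwistlengthestimate}):
\[
\Energy_\rho(X_\epsilon^{(n)}) \;\geq\; \frac{\ell_\rho(T_\gamma^n \eta)^2}{2\, E_{X_\epsilon}(\eta)} \;\geq\; \frac{(1-\epsilon)\bigl(n \cdot i(\gamma,\eta) \cdot \ell_\rho(\gamma) - C\bigr)^2}{2\bigl(i(\gamma,\eta)^2 + \epsilon\bigr)},
\]
the last inequality using $E_{X_\epsilon}(\eta) \leq (i(\gamma,\eta)^2 + \epsilon)/(1-\epsilon)$ from \Cref{lem:almostcollarequality}. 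For the matching upper bound, construct an explicit competitor $f_n \colon (S, X_\epsilon) \to (S, \rho)$ in the class $[T_\gamma^n]$ as follows. On the Strebel annulus $A$ for $\gamma$ in $X_\epsilon$---conformally a flat cylinder of modulus $M_\epsilon = 1/E_{X_\epsilon}(\gamma) \in [1/(1+\epsilon), 1/(1-\epsilon)]$---define $f_n$ to wrap affinely into a tubular neighborhood of the $\rho$-geodesic representative of $\gamma$, performing $n$ rotations while crossing the collar, and extend outside $A$ by a fixed Lipschitz map. A direct computation of the Hilbert--Schmidt norm of the differential yields
\[
\Energy_\rho(X_\epsilon^{(n)}) \;\leq\; \frac{n^2\, \ell_\rho(\gamma)^2}{2 M_\epsilon} + O(n) \;\leq\; \frac{(1+\epsilon)\, n^2 \ell_\rho(\gamma)^2}{2} + O(n).
\]

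Combining the lower bound for $\rho'$ with the upper bound for $\rho$ via the hypothesis $\Energy_\rho = \Energy_{\rho'}$, dividing by $n^2$, and letting $n \to \infty$ followed by $\epsilon \to 0$, one obtains $\ell_{\rho'}(\gamma)^2 \leq \ell_\rho(\gamma)^2$; the reverse inequality follows by symmetry, yielding $\ell_\rho(\gamma) = \ell_{\rho'}(\gamma)$ for every simple closed curve $\gamma$, and hence equality of the simple length spectra.

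The main obstacle is the upper bound construction: one must glue the affine model on $A$ continuously to the fixed map on its complement so that the resulting map lies in the correct homotopy class $[T_\gamma^n]$, and one must carry out the energy estimate using the genuine (only approximately flat) $\rho$-metric on the target collar, verifying that the deviation from the Euclidean model contributes only $O(n)$. The lower bound, by contrast, is essentially immediate once the three lemmas above are in hand; indeed, \Cref{lem:almostcollarequality} is tailored precisely so that the coefficients of $n^2$ in the two bounds match as $\epsilon \to 0$.
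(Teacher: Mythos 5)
Your argument is essentially the paper's proof: the same squeeze of $\Energy(T_\gamma^n X_\epsilon)/n^2$ between a lower bound from Minsky's inequality plus the Dehn-twist length estimate and an upper bound from an explicit twisting competitor on the Strebel annulus, with \Cref{lem:almostcollarequality} chosen exactly so the two coefficients of $n^2$ meet as $\epsilon \to 0$. The ``main obstacle'' you flag is resolved in the paper more simply than you anticipate: the competitor collapses the subannulus onto the $\rho$-geodesic loop itself (after first deforming the identity to a map $k_0$ that does so), so the energy computation is exact in the flat cylinder coordinates and no control of the target metric on a two-dimensional collar is needed.
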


Simply put, the energy spectrum of a metric determines its simple length spectrum. In fact, we will detail a procedure that recovers the length of a simple closed curve from the information given by the energy spectrum. Our principal observation is that when repeatedly Dehn twisting around a simple closed curve the quadratic growth rate of the energy is proportional to the square of the length of that curve in $(S,\rho)$.

We now start our proof of \Cref{thm:energyspectrumgiveslengthspectrum}. For this we fix a non-positively curved Riemannian metric $\rho$ on $S$. We let $\Energy \colon \TeichSpace(S) \to \R$ be its energy spectrum.

\begin{definition}\label{def:taunotation}
For $\gamma\subset S$ a simple closed curve, $X \in \TeichSpace(S)$ and $n\in \N$ we define
\[
\tau(X,\gamma,n) = \frac{\Energy(T^n_\gamma X)}{n^2}
\]
and
\[
\tau^-(X,\gamma) = \liminf_{n\to\infty} \tau(X,\gamma,n) \text{ and } \tau^+(X,\gamma) = \limsup_{n\to\infty} \tau(X,\gamma,n).
\]
\end{definition}
\begin{remark}\label{rmk:energyofdehntwist}
The value of the energy spectrum at the point $T^n_\gamma X$ can alternatively be characterised as
\[
\Energy(T^n_\gamma X) = \inf_{h'\in [T^n_\gamma]} \mathcal{E}(h';J,\rho)
\]
where the infimum runs over all Lipschitz continuous maps $h'\colon S\to S$ homotopic to $T^n_\gamma$. To see this we let $J$ be a complex structure on $S$ representing $X\in \TeichSpace(S)$. Then the complex structure $(T^{-n}_\gamma)^*J$ is a representative of $T^n_\gamma X$. Now the map $T^n_\gamma \colon (S,J) \to (S,(T_\gamma^{-n})^* J)$ is a biholomorphism, hence for any Lipschitz continuous map $h \colon S \to S$ we have $\Energy(h; (T_\gamma^{-n})^* J, \rho) = \Energy(h\circ T^n_\gamma;J,\rho)$. Noting that $h\in [\id]$ if and only if $h \circ T^n_\gamma \in [T^n_\gamma]$ we find that indeed
\[
\Energy(T^n_\gamma X) = \inf_{h\in [\id]}\mathcal{E}(h; (T_\gamma^{-n})^* J, \rho) = \inf_{h'\in [T^n_\gamma]} \mathcal{E}(h';J,\rho).
\]
\end{remark}
We will now show that the quantities $\tau^-(\cdot, \gamma)$ and $\tau^+(\cdot, \gamma)$ can be used to measure $\ell_\rho(\gamma)$.
\begin{lemma}\label{lem:tauupperbound}
For any $X\in \TeichSpace(S)$ and $\gamma\subset S$ a simple closed curve we have
\[
\tau^+(X,\gamma) \leq \frac{1}{2} E_X(\gamma) \cdot \ell_\rho^2(\gamma).
\]
\end{lemma}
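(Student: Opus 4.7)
By \Cref{rmk:energyofdehntwist}, it suffices to produce, for every large $n$, a Lipschitz map $h_n\colon S\to S$ in the homotopy class $[T_\gamma^n]$ with
\[
\mathcal{E}(h_n;J,\rho) \leq \tfrac12 n^2 E_X(\gamma)\ell_\rho^2(\gamma) + o(n^2),
\]
where $J$ represents $X$. The plan is to concentrate all the twisting on the Strebel annulus of $\gamma$ on $X$, where the conformal structure is most compatible with $\gamma$, and to send that annulus linearly into a thin Fermi tube around the $\rho$-geodesic representative of $\gamma$.

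Let $\phi$ be the Strebel differential on $X$ for $\gamma$ and let $A\subset X$ be its annulus of non-singular horizontal leaves. By \Cref{subsec:conformalgeometry}, the singular flat metric $|\phi|$ lies in the conformal class of $X$ and identifies $A$ isometrically with the Euclidean cylinder $[0,M]\times\mathbb{R}/\mathbb{Z}$ with $M=1/E_X(\gamma)$. Let $\bar\gamma\colon \mathbb{R}/\ell\mathbb{Z}\to (S,\rho)$ parametrize by arclength the $\rho$-geodesic representative of $\gamma$, $\ell=\ell_\rho(\gamma)$. For each small $\epsilon>0$ a tubular neighborhood $B_\epsilon$ of $\bar\gamma$ carries Fermi coordinates $(r,s)\in[-\epsilon,\epsilon]\times\mathbb{R}/\ell\mathbb{Z}$ in which $\rho=dr^2+G(r,s)^2\,ds^2$, with $G(0,s)=1$ and $\sup_{B_\epsilon}|G^2-1|\to 0$ as $\epsilon\to 0$. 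By isotopy extension we may fix a diffeomorphism $f_0\colon S\to S$ isotopic to the identity with $f_0(A)=B_\epsilon$ and $f_0(t,\theta)=(\epsilon(2t/M-1),\,\ell\theta)$ on $A$. Set
\[
h_n(t,\theta) = f_0\bigl(t,\theta + nt/M\bigr) \text{ on } A, \qquad h_n = f_0 \text{ on } S\setminus A.
\]
At $t\in\{0,M\}$ one has $nt/M\in\mathbb{Z}$, so the two definitions agree on $\partial A$ and $h_n$ is globally Lipschitz. On $A$ the map factors as $f_0\circ T_\gamma^n$ where $T_\gamma$ is realised as the standard shear $(t,\theta)\mapsto(t,\theta+t/M)$ supported in $A$; since $f_0\simeq \id$ this shows $h_n\in [T_\gamma^n]$.

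The energy on $A$ is a direct calculation in the $|\phi|$-orthonormal frame $\partial_t,\partial_\theta$: the derivatives are $\partial_t h_n = (2\epsilon/M)\partial_r + (n\ell/M)\partial_s$ and $\partial_\theta h_n = \ell\partial_s$, giving
\[
e(h_n) = \tfrac12\Bigl(\tfrac{4\epsilon^2}{M^2} + \Bigl(\tfrac{n^2\ell^2}{M^2}+\ell^2\Bigr)G^2\Bigr).
\]
Using $G^2\leq 1 + \delta(\epsilon)$ with $\delta(\epsilon)\to 0$ and $\vol_{|\phi|}(A)=M$ yields
\[
\int_A e(h_n) \leq \tfrac{n^2\ell^2(1+\delta(\epsilon))}{2M} + C(\epsilon),
\]
while on $S\setminus A$, $h_n=f_0$ contributes a constant independent of $n$. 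Dividing by $n^2$ and taking $\limsup$ produces $\tau^+(X,\gamma)\leq \tfrac12(1+\delta(\epsilon))E_X(\gamma)\ell_\rho^2(\gamma)$; letting $\epsilon\to 0$ closes the argument.

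The main obstacle is the bookkeeping required to produce the diffeomorphism $f_0$ with the prescribed linear model on $A$, and to check orientations so that the twisted map lies in $[T_\gamma^n]$ rather than $[T_\gamma^{-n}]$; once that is arranged, the energy bound is a direct computation in the two preferred coordinate systems.
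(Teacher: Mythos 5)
Your strategy coincides with the paper's: build explicit Lipschitz competitors in $[T_\gamma^n]$ (via \Cref{rmk:energyofdehntwist}) that concentrate all $n$ twists across the Strebel cylinder of $\gamma$, compute the energy in the flat cylinder coordinates to get $\tfrac{n^2\ell_\rho^2(\gamma)}{2M}+O(1)$ with $M=1/E_X(\gamma)$, and divide by $n^2$. The only substantive variation is the target: the paper collapses the sub-cylinder onto the geodesic representative, $k_n(x,[y])=\eta([y+n(x-\epsilon)/(M-2\epsilon)])$, so the energy density is exactly $\tfrac12\{(n/(M-2\epsilon))^2+1\}\ell_\rho^2(\gamma)$ with no curvature error term; you instead embed the cylinder into a thin Fermi tube, which buys you nothing here and costs you the $G^2\le 1+\delta(\epsilon)$ estimate and an extra limit. (It also quietly uses that the $\rho$-geodesic representative of a simple closed curve is embedded, which holds in non-positive curvature but is an extra hypothesis your collapsing-free variant genuinely needs.)

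There is, however, a step that fails as written. The Strebel annulus $A$ --- the union of \emph{all} non-singular horizontal leaves --- is an open dense subset of $X$ whose complement is the critical graph of $\phi$, a measure-zero $1$-complex, not a subsurface. Hence no diffeomorphism (or homeomorphism) $f_0$ of $S$ can satisfy $f_0(A)=B_\epsilon$, since the complement of $B_\epsilon$ is a compact subsurface with nonempty interior; ``isotopy extension'' cannot produce such a map. Relatedly, the conformal identification $A\cong[0,M]\times\R/\Z$ folds the two boundary circles onto the critical graph non-injectively, so the formula $f_0(t,\theta)=(\epsilon(2t/M-1),\ell\theta)$ does not descend to a well-defined map on $X$, and the check that ``the two definitions agree on $\partial A$'' does not parse. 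The repair is exactly the device the paper uses: prescribe the linear/twisted model only on a compact sub-cylinder $A_{\epsilon'}=\{(t,\theta):\epsilon'\le t\le M-\epsilon'\}$, whose complement in $X$ is an honest compact subsurface on which one can interpolate by an arbitrary Lipschitz map contributing $O(1)$ energy, and let $\epsilon'\to 0$ at the very end; since the relevant modulus $M-2\epsilon'$ tends to $M$, the final bound $\tau^+(X,\gamma)\le\tfrac12 E_X(\gamma)\ell_\rho^2(\gamma)$ is unaffected. With that modification (and the orientation bookkeeping you already flag), your computation is correct.
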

\begin{proof}
Consider a complex structure on $S$ that represents $X\in \TeichSpace(S)$. For convenience we will denote $S$ equipped with this choice of complex structure also as $X$. 

We will find an upper bound for the quantity $\Energy(T^n_\gamma X)$. To this end we construct a Lipschitz continuous map $k_n \colon X \to (S,\rho)$ in the homotopy class of $T^n_\gamma$ for which we have an explicit bound on its energy. Then the observations of \Cref{rmk:energyofdehntwist} will imply that $\Energy(T_\gamma^n X)\leq \mathcal{E}(k_n)$.

Consider the Strebel differential on $X$ for the curve $\gamma$. Let $A$ be the annulus in $X$ consisting of the union of all non-singular horizontal leaves of this Strebel differential. If $M = M(A)$ is the modulus of $A$, then $E_X(\gamma) = 1/ M$. By uniformising $A$ we can find a conformal identification between $A$ and the flat cylinder $[0,M]\times \R/\Z$. We use this to equip $A$ with coordinates $(x,[y])\in [0,M]\times \R/\Z$.

Let $\eta \colon \R/\Z \to (S,\rho)$ be a length minimising geodesic loop freely homotopic to $\gamma$ (so $\ell_\rho(\gamma) = l_\rho(\eta)$). Let $0 < \epsilon < M/2$ arbitrary. By deforming the identity map of $S$ we can find a Lipschitz continuous map $k_0 \colon X \to S$ that is homotopic to the identity and on the subcylinder 
\[
A_\epsilon = \{(x,[y]) \mid \epsilon \leq x \leq M-\epsilon\}
\]
is given by $k_0(x,[y]) = \eta([y])$. Let $Y$ be the complement of $A_\epsilon$ in $X$. We set $C = \mathcal{E}(k_0\vert_{Y})$ which is a constant depending only on our choice of $k_0$ (which in turn depends only on $\epsilon$).

For $n\in \N$ we define the maps $k_n \colon X \to S$ as follows. On $Y$ we set $k_n \vert_{Y} \equiv k_0\vert_{Y}$ and on $A_\epsilon$ we put
\[
k_n(x,[y]) = \eta\left(\left[y + n \cdot \frac{x-\epsilon}{M-2\epsilon}\right]\right).
\]
The map $k_n$ coincides with $k_0$ on the boundaries of $A_\epsilon$ and hence each $k_n$ defines a Lipschitz continuous map on $X$. Note that each $k_n$ is homotopic to $T_\gamma^n$.

We now calculate the energy of the maps $k_n$. To this end this we equip $A_\epsilon$ with the conformal flat metric obtained from the identification $A \cong [0,M]\times\R/\Z$. Using this choice of metric, we find on $A_\epsilon$ that
\[
e(k_n) = \frac{1}{2}\left\lbrace \norm*{\parder{k_n}{x}}^2 + \norm*{\parder{k_n}{y}}^2 \right\rbrace = \frac{1}{2} \left\lbrace \left(\frac{n}{M-2\epsilon} \right)^2 + 1 \right\rbrace \norm{\dot{\eta}}^2.
\]
Hence
\begin{align*}
\mathcal{E}(k_n\vert_{A_\epsilon}) &= \int_0^1 \int_\epsilon^{M-\epsilon} e(k_n) dx dy \\&
=  \frac{1}{2} \left\lbrace \left(\frac{n}{M-2\epsilon} \right)^2 + 1 \right\rbrace \cdot \int_0^1 \int_\epsilon^{M-\epsilon} \norm{\dot{\eta}}^2 dx dy \\&
=  \frac{1}{2} \left\lbrace \left(\frac{n}{M-2\epsilon} \right)^2 + 1 \right\rbrace \cdot (M-2\epsilon) \cdot \ell_\rho^2(\gamma).
\end{align*}
We can now estimate (cf. \Cref{rmk:energyofdehntwist})
\begin{align*}
\tau(X,\gamma,n) &= \Energy(T^n_\gamma X) \leq \mathcal{E}(k_n) = \mathcal{E}(k_n\vert_{A_\epsilon}) + \mathcal{E}(k_n\vert_Y) \\&= \frac{1}{2} \left\lbrace
\frac{n^2}{M-2\epsilon} + M-2\epsilon \right\rbrace\cdot  \ell_\rho^2(\gamma) + C.
\end{align*}
By dividing by $n^2$ and taking the limit superior for $n\to\infty$ we find
\[
\tau^+(X,\gamma) \leq \frac{1}{2} \frac{1}{M-2\epsilon} \cdot \ell_\rho^2(\gamma).
\]
Finally noting that $\epsilon>0$ was arbitrary we conclude that
\[
\tau^+(X,\gamma) \leq \frac{1}{2} \frac{1}{M} \cdot \ell_\rho^2(\gamma)  = \frac{1}{2} E_X(\gamma) \cdot \ell_\rho^2(\gamma).
\]
\end{proof}

\begin{lemma}\label{lem:taulowerbound}
For any $X \in \TeichSpace(S)$ and simple closed curves $\gamma,\eta \subset S$ we have
\[
\tau^-(X,\gamma) \geq \frac{1}{2} \frac{i(\gamma,\eta)^2 \cdot \ell_\rho^2(\gamma)}{E_X(\eta)}
\]
\end{lemma}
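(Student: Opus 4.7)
The plan is to combine Minsky's energy estimate (\Cref{lem:minskyestimate}) with the length estimate for Dehn twists (\Cref{lem:dehntwistlengthestimate}), using the reformulation of $\Energy(T^n_\gamma X)$ provided by \Cref{rmk:energyofdehntwist}.

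If $i(\gamma,\eta) = 0$ the inequality is trivial because the right-hand side vanishes and $\Energy \geq 0$, so assume $i(\gamma,\eta) > 0$. Fix a complex structure $J$ representing $X$. By \Cref{rmk:energyofdehntwist},
\[
\Energy(T^n_\gamma X) = \inf_{h \in [T^n_\gamma]} \mathcal{E}(h; J, \rho),
\]
where the infimum is over Lipschitz continuous maps $h\colon (S,J)\to (S,\rho)$ homotopic to $T^n_\gamma$. For any such $h$, the loop $h\circ \eta$ is freely homotopic to $T^n_\gamma \circ \eta$, hence
\[
\ell_\rho(h\circ \eta) \geq \ell_\rho(T^n_\gamma \eta).
\]

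Applying \Cref{lem:minskyestimate} on $(S,J)$ to the curve $\eta$ gives
\[
\mathcal{E}(h; J,\rho) \geq \frac{1}{2}\frac{\ell_\rho^2(h\circ \eta)}{E_X(\eta)} \geq \frac{1}{2}\frac{\ell_\rho^2(T^n_\gamma \eta)}{E_X(\eta)}.
\]
Now invoke \Cref{lem:dehntwistlengthestimate} to obtain a constant $C = C(\gamma,\eta) > 0$ with
\[
\ell_\rho(T^n_\gamma \eta) \geq n\cdot i(\gamma,\eta)\cdot \ell_\rho(\gamma) - C
\]
for all $n \geq 1$. For $n$ large enough the right-hand side is positive, so squaring preserves the inequality, and taking the infimum over $h$ yields
\[
\Energy(T^n_\gamma X) \geq \frac{1}{2}\frac{\bigl(n\cdot i(\gamma,\eta)\cdot \ell_\rho(\gamma) - C\bigr)^2}{E_X(\eta)}.
\]

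Dividing by $n^2$ and passing to the limit inferior gives
\[
\tau^-(X,\gamma) = \liminf_{n\to\infty}\frac{\Energy(T^n_\gamma X)}{n^2} \geq \frac{1}{2}\frac{i(\gamma,\eta)^2\cdot \ell_\rho^2(\gamma)}{E_X(\eta)},
\]
which is the claim. The main points to verify carefully are that the infimum in \Cref{rmk:energyofdehntwist} allows us to apply Minsky's bound uniformly over the homotopy class, and that the linear defect $C$ in the Dehn twist length estimate is negligible after division by $n^2$; both are immediate once set up, so there is no real obstacle beyond correctly assembling the three ingredients.
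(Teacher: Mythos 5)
Your argument is correct and is essentially identical to the paper's proof: both combine \Cref{rmk:energyofdehntwist}, Minsky's estimate (\Cref{lem:minskyestimate}) applied to $\eta$, and the Dehn twist length bound (\Cref{lem:dehntwistlengthestimate}), then divide by $n^2$ and take the limit inferior. Your extra care about the case $i(\gamma,\eta)=0$ and about the sign before squaring is fine but not needed beyond what the paper does.
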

\begin{proof}
Let us again, by abuse of notation, denote by $X$ both a point in Teichm\"uller space and a Riemann surface representing it. The lemma follows easily from \Cref{lem:minskyestimate} and \Cref{lem:dehntwistlengthestimate}. Namely, from the latter follows that a constant $C =C(\gamma,\eta)>0$ exists such that
\[
\ell_\rho(T^n_\gamma \eta) \geq n \cdot i(\gamma,\eta) \cdot \ell_\rho(\gamma) - C.
\]
Any map $h \colon X \to (S,\rho)$ homotopic to $T^n_\gamma$ maps $\eta$ to a curve homotopic to $T^n_\gamma\eta$. Now \Cref{lem:minskyestimate} gives a lower bound on the energy of such maps. It follows that
\[
\tau(X,\gamma,n) = \Energy(T_\gamma^nX) \geq \frac{1}{2} \frac{(n\cdot i(\gamma,\eta) \cdot \ell_\rho(\gamma) - C)^2}{E_X(\eta)}.
\]
Dividing by $n^2$ and taking the limit inferior for $n\to\infty$ gives
\[
\tau^-(X,\gamma) \geq \frac{1}{2} \frac{i(\gamma,\eta)^2 \cdot \ell_\rho^2(\gamma)}{E_X(\eta)}.
\]
\end{proof}
We now have for any $X\in \TeichSpace(S)$ and $\gamma,\eta \subset S$ simple closed curves that
\begin{align}\label{eq:lengthestimatefromenergy}
\frac{1}{2} \frac{i(\gamma,\eta)^2 \cdot \ell_\rho^2(\gamma)}{E_X(\eta)} \leq \tau^-(X,\gamma) \leq \tau^+(X,\gamma) \leq \frac{1}{2} E_X(\gamma) \cdot \ell_\rho^2(\gamma).
\end{align}
We observe that these bounds are close together if the quantity $E_X(\gamma)E_X(\eta)$ is close to $i(\gamma,\eta)^2$. We use \Cref{lem:almostcollarequality} to finish the proof of \Cref{thm:energyspectrumgiveslengthspectrum}.

\begin{proof}[Proof of \Cref{thm:energyspectrumgiveslengthspectrum}]
Fix a simple closed curve $\gamma\subset S$. We invoke \Cref{lem:almostcollarequality} to find a simple closed curve $\eta\subset S$ with $i(\gamma,\eta)>0$ and for every $k\in \N$ a $X_k\in \TeichSpace(S)$ such that $E_{X_k}(\gamma)E_{X_k}(\eta) \leq i(\gamma,\eta)^2 + 1/k$ and $\abs{E_{X_k}(\gamma)-1}\leq 1/k$. Plugging these inequalities into \Cref{eq:lengthestimatefromenergy} yields
\[
\frac{1}{2} \frac{i(\gamma,\eta)^2(1-1/k)}{i(\gamma,\eta)^2+1/k} \cdot \ell_\rho^2(\gamma) \leq \tau^-(X_k,\gamma) \leq \tau^+(X_k, \gamma) \leq \frac{1}{2}(1+1/k) \cdot \ell_\rho^2(\gamma).
\]
It follows that both $\tau^-(X_k,\gamma)$ and $\tau^+(X_k,\gamma)$ converge to $\frac{1}{2}\cdot \ell^2_\rho(\gamma)$ for $k\to\infty$. We see that $\ell_\rho(\gamma)$ is entirely determined by the energy spectrum since the same holds true for the functions $\tau^+$ and $\tau^-$.

More precisely, if $\rho'$ is a second non-positively curved Riemannian metric on $S$ with equal energy spectrum, then \Cref{eq:lengthestimatefromenergy} also holds with $\ell_{\rho'}(\gamma)$ in place of $\ell_\rho(\gamma)$. We then see that 
\[
\frac{1}{2}\ell^2_{\rho'}(\gamma) =\lim_{k\to\infty} \tau^-(X_k,\gamma) = \lim_{k\to\infty} \tau^+(X_k,\gamma) = \frac{1}{2}\ell^2_{\rho}(\gamma)
\]
hence $\ell_{\rho}(\gamma) = \ell_{\rho'}(\gamma)$. Since $\gamma\subset S$ was arbitrary, it follows that $\rho$ and $\rho'$ have equal simple length spectrum.
\end{proof}
\section{Further comparison to the length spectra}\label{sec:furthercomparison}
In this section we show that the converse to the result of the previous section does not hold. Namely, the simple length spectrum does not determine the energy spectrum. Thus, we see that the energy spectrum carries more information.

\begin{proposition}\label{prop:lengthspectrumdoesnotgiveenergyspectrum}
For every hyperbolic metric on a surface there exists a negatively curved Riemannian metric on that surface with equal simple length spectrum but different energy spectrum.
\end{proposition}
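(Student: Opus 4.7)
The plan is to construct $\rho'$ as a small conformal perturbation of the hyperbolic metric $\rho$, supported away from every simple closed $\rho$-geodesic. This will preserve the simple length spectrum but strictly increase the area, which is enough to distinguish the energy spectra at $[\rho]\in\TeichSpace(S)$.

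By the Birman--Series theorem the union of all complete simple geodesics of $(S,\rho)$ is a closed subset of $S$ of Hausdorff dimension one, and hence has empty interior. Choose a non-empty open ball $U\subset S$ whose closure is disjoint from this set, a smooth bump function $\phi_0\geq 0$ with $\supp(\phi_0)\subset U$ and $\phi_0\not\equiv 0$, and put $\rho':=e^{2t\phi_0}\rho$ for some small $t>0$. The conformal change formula for the Gaussian curvature perturbs $K_\rho=-1$ by a quantity of order $t$, so $\rho'$ is smooth and negatively curved once $t$ is small enough; moreover $\rho'$ lies in the conformal class of $\rho$, so $[\rho]=[\rho']$ in $\TeichSpace(S)$.

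To compare simple length spectra, fix $\alpha\in\mathcal{S}$ and let $\gamma$ be its $\rho$-geodesic representative. Then $\gamma$ lies in the Birman--Series set, hence is disjoint from $\overline{U}$, so $\rho'=\rho$ on an open neighbourhood of $\gamma$. The geodesic equation being local, $\gamma$ is also a closed $\rho'$-geodesic of the same length; since $\rho'$ is negatively curved, it is the unique one in $\alpha$, and $\ell_{\rho'}(\alpha)=\ell_\rho(\alpha)$. To compare energy spectra, note that the identity map $(S,\rho)\to(S,\rho')$ is conformal, hence harmonic, and on a negatively curved target any harmonic map is the unique energy minimiser in its homotopy class; its energy density with respect to $\rho$ is $e^{2t\phi_0}$, so $\Energy_{\rho'}([\rho])=\Area(\rho')$. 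Analogously $\Energy_{\rho}([\rho])=\Area(\rho)=4\pi(g-1)$ by Gauss--Bonnet. Since $\phi_0\geq 0$ with $\phi_0\not\equiv 0$,
\[
\Area(\rho')=\int_S e^{2t\phi_0}\,\vol_\rho>\Area(\rho),
\]
so the energy spectra disagree at the point $[\rho]=[\rho']$.

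The only non-trivial input is the Birman--Series theorem: without it the simple closed geodesics of $\rho$ could be dense in $S$, and a conformal perturbation would generically alter some simple length. Once the open region $U$ is in hand, simple length invariance is immediate from locality of geodesics, and the change of energy spectrum reduces to a strict change of area because at the Teichm\"uller point $[\rho]=[\rho']$ the energy-minimising harmonic map is in both cases the identity.
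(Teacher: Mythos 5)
Your proposal is correct and follows essentially the same route as the paper: use Birman--Series to find an open set missing all simple closed geodesics, bump the metric conformally there to preserve the simple length spectrum while increasing the area, and detect the area change through the energy spectrum. The only cosmetic difference is that you compare the two energy spectra at the single point $[\rho]=[\rho']$, whereas the paper compares their global minima over $\TeichSpace(S)$ via its Lemma on the minimum of the energy spectrum being the area; both reduce to the same computation.
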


We will show this by proving that the energy spectrum encodes the area of a Riemannian metric on a surface, whereas the simple length spectrum does not. We will make use of the following well-known observation.

\begin{lemma}\label{lem:minimumenergyisarea}
Let $(S,\rho)$ be a surface of genus at least one equipped with a Riemannian metric. Then the energy spectrum of $(S,\rho)$ (with $[f]=[\id]$) satisfies
\[
\Energy(X) \geq \Area(S,\rho) \text{ for all } X\in \TeichSpace(S).
\]
If, furthermore, the metric $\rho$ is non-positively curved, then equality is achieved if and only if $X$ equals $[\rho] \in \TeichSpace(S)$, the point in Teichm\"uller space determined by the metric $\rho$.
\end{lemma}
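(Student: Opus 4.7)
The plan is to compare the energy density to the Jacobian determinant pointwise. Fix a representative metric $\sigma$ of $X \in \TeichSpace(S)$ and a Lipschitz map $h \in [\id]$. By Rademacher's theorem $h$ is differentiable almost everywhere, and at each such point the differential $dh$ has singular values $\sigma_1,\sigma_2\geq 0$ (with respect to the metrics $\sigma$ and $\rho$). Then
\[
e(h;\sigma,\rho) = \tfrac{1}{2}(\sigma_1^2+\sigma_2^2) \geq \sigma_1\sigma_2 = \abs{\text{Jac}(h)}
\]
by the arithmetic-geometric mean inequality, with equality if and only if $\sigma_1=\sigma_2$, i.e.\ $h$ is weakly conformal at that point.

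Integrating and using the triangle inequality for integrals gives
\[
\mathcal{E}(h;\sigma,\rho) \geq \int_S \abs{\text{Jac}(h)}\,\vol_\sigma \geq \left| \int_S h^*\vol_\rho\right| = \abs{\deg(h)}\cdot \Area(S,\rho).
\]
Since $h$ is homotopic to the identity, $\deg(h)=1$, so $\mathcal{E}(h;\sigma,\rho)\geq \Area(S,\rho)$. Taking the infimum over all such $h$ yields $\Energy(X)\geq \Area(S,\rho)$, and since the right hand side does not depend on the choice of representative $\sigma$ within its isotopy class, this holds for all $X\in \TeichSpace(S)$. For the converse direction when $X=[\rho]$, I would represent $X$ by the metric $\rho$ itself and observe that the identity map has energy density $1$ everywhere (as $\sigma_1=\sigma_2=1$), so $\mathcal{E}(\id;\rho,\rho)=\Area(S,\rho)$, giving equality.

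For the equality case when $\rho$ is non-positively curved, I would invoke the Eells--Sampson theorem to obtain an energy-minimizing harmonic map $h_X\colon (S,X)\to (S,\rho)$ homotopic to the identity with $\mathcal{E}(h_X;X,\rho)=\Energy(X)$. If $\Energy(X)=\Area(S,\rho)$ then both inequalities above are equalities for $h_X$. Equality in the first forces $\sigma_1=\sigma_2$ almost everywhere, so $h_X$ is weakly conformal; equality in the second forces $\text{Jac}(h_X)\geq 0$ a.e., so $h_X$ is orientation-preserving where it is differentiable. Thus $h_X$ is a (weakly) holomorphic map of degree one from $(S,X)$ to the Riemann surface underlying $(S,\rho)$, and hence a biholomorphism. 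Since $h_X$ is homotopic to the identity, this identifies the complex structure of $X$ with that of $\rho$ up to isotopy, i.e.\ $X=[\rho]$ in $\TeichSpace(S)$.

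The main subtlety will be justifying that the energy minimizer $h_X$ is regular enough that the pointwise equality analysis applies and that a weakly conformal degree-one map is a biholomorphism; this is handled by the smoothness of harmonic maps into non-positively curved targets together with the classical fact that a nonconstant holomorphic map of closed Riemann surfaces of degree one is biholomorphic. Everything else is routine pointwise linear algebra and integration.
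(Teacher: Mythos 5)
Your proposal is correct and follows essentially the same route as the paper: the pointwise inequality $e(h)\geq\abs{\mathrm{Jac}(h)}$ (which the paper writes in conformal coordinates as $\abs{h_z}^2+\abs{h_{\overline z}}^2\geq\abs{h_z}^2-\abs{h_{\overline z}}^2$), integration against the degree to get $\Area(S,\rho)$, and Eells--Sampson plus the conformality of an energy-equality minimizer to force $X=[\rho]$. The only cosmetic differences are your use of singular values and AM--GM in place of the $h_z,h_{\overline z}$ formalism, and your appeal to the fact that a degree-one holomorphic map of closed Riemann surfaces is a biholomorphism where the paper cites Riemann--Hurwitz to exclude branch points.
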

\begin{proof}
Let $\sigma$ be a hyperbolic metric on $S$. The metrics $\sigma$ and $\rho$ determine conformal structures on $S$. In corresponding local conformal coordinates $z$ resp. $w$ on $S$ we can write $\sigma = \sigma(z) \abs{dz}^2$ and $\rho = \rho(w) \abs{dw}^2$. Then the energy density of a map $h \colon (S,\sigma) \to (S,\rho)$ is given by
\[
e(h;\sigma,\rho) = \frac{\rho(h(z))}{\sigma(z)}\left\lbrace \abs{h_z}^2 + \abs{h_{\overline{z}}}^2 \right\rbrace
\]
and its Jacobian is given by
\[
J(h;\sigma,\rho) = \frac{\rho(h(z))}{\sigma(z)}\left\lbrace \abs{h_z}^2 - \abs{h_{\overline{z}}}^2 \right\rbrace
\]
(see \cite[Section 2]{WolfTeichmullerTheory}). Integrating over $S$ gives
\[
\mathcal{E}(h;\sigma,\rho) = \int_{S} e(h;\sigma,\rho) \vol_\sigma \geq \int_{S} J(h;\sigma,\rho) \vol_\sigma = \Area(S,\rho)
\]
with equality if and only if $h$ is a conformal map (i.e. $h_{\overline{z}} = 0$).

From this follows immediately that $\Energy(X) \geq \Area(S,\rho)$ for all $X\in \TeichSpace(S)$. If $[\sigma]=[\rho]\in \TeichSpace(S)$, then there exists a conformal map $h\colon (S,\sigma) \to (S,\rho)$ homotopic to the identity. For this map we see that $\Energy(h;\sigma,\rho) = \Area(S,\rho)$, so $\Energy([\sigma]) = \mathcal{E}(h;\sigma,\rho) = \Area(S,\rho)$. 

Finally, suppose $\rho$ has non-positive curvature. Assume $X = [\sigma] \in \TeichSpace(S)$ such that $\Energy(X) = \Area(S,\rho)$. By \cite{EellsSampson} there exists a energy minimising harmonic map $h \colon (S,\sigma) \to (S,\rho)$ homotopic to the identity. Then $\mathcal{E}(h;\sigma,\rho) = \Energy(X) = \Area(S,\rho)$, hence $h$ must be a conformal map. Because $h$ has degree one, it follows from the Riemann-Hurwitz formula that it can not have branch points. We conclude that $h$ is a biholomorphism isotopic to the identity which means that $X = [\sigma] = [\rho]$.
\end{proof}

\begin{proof}[{\Cref{prop:lengthspectrumdoesnotgiveenergyspectrum}}]
Let $\rho$ be any hyperbolic metric on the surface $S$. Let $\mathcal{G}$ be the union of all simple closed geodesics in $(S,\rho)$. Birman and Series prove in \cite{BirmanSeries} that this set is nowhere dense on $S$. In particular there exists an open set $U\subset S$ such that $\overline{U}$ does not intersect $\overline{\mathcal{G}}$. Let $\chi \colon S \to [0,1]$ be a smooth bump function which is zero outside of $U$ and equals one on some point in $U$. For $\delta>0$ we consider the metric $\rho' = (1+\delta\cdot \chi)\rho$. If we take $\delta$ small enough, then $\rho'$ is still a negatively curved metric. Because $\rho = \rho'$ on an open neighbourhood of $\mathcal{G}$, it follows that the simple closed geodesics for either metric are the same. As a result their simple length spectra are equal.

Finally, on some points in $U$ we have that $(1+\delta\cdot \chi) > 1$ and hence $\Area(S,\rho')>\Area(S,\rho)$. Taking into consideration \Cref{lem:minimumenergyisarea} we see (denoting the energy spectra of $\rho$ and $\rho'$ by $\Energy$ and $\Energy'$ respectively) that
\[
\min_{X\in\TeichSpace(S)} \Energy'(X) = \Area(S,\rho') > \Area(S,\rho) = \min_{X\in\TeichSpace(S)} \Energy(X) 
\]
so $\Energy \neq \Energy'$.
\end{proof}

We conclude that the energy spectrum is a more sensitive way to tell non-positively curved Riemannian metrics on $S$ apart than the simple length spectrum. With this in mind, we can pose the following interesting question: how does the energy spectrum compare to the (full) marked length spectrum? 

The marked length spectrum carries much more information than the simple length spectrum. Namely, Otal proved in \cite{Otal} that the set of negatively curved Riemannian metrics on a surface, determined up to isotopy, satisfies marked length spectrum rigidity. Furthermore, in \cite{CrokeFathiFeldman}, it is proved that the same holds true for the set of non-positively curved Riemannian metrics under the additional assumptions that these metrics do not have conjugate points. It follows in particular that for such metrics the marked length spectrum determines the energy spectrum. A, to the author, interesting question is now whether the sensitivity of energy spectrum falls strictly between that of the simple length spectrum and full marked length spectrum or whether the energy spectrum can also distinguish between all non-positively curved Riemannian metrics.

Taking this one step further we mention that Bonahon showed in \cite{Bonahon} that when considering marked length spectrum rigidity one can not drop the assumption that the metrics under consideration are Riemannian. More precisely, for any Riemannian metric of negative curvature on $S$ he constructed a non-Riemannian metric that has the same marked length spectrum but that is not isometric by an isometry isotopic to the identity. The notion of Dirichlet energy can be generalised to maps between manifolds with non-Riemannian metrics (see \cite{KorevaarSchoen}) and hence also in this context the energy spectrum can be defined. This allows us to ask whether the energy spectrum could perhaps provide more information and distinguish between negatively or non-positively curved non-Riemannian metrics.

\section{Energy spectrum rigidity}\label{sec:rigidity}
We now consider the question whether the energy spectrum of a Riemannian metric uniquely determines that metric (up to isotopy). If $\mathcal{M}$ is a set of metrics on $S$, determined up to isotopy, then we can consider the map $\mathcal{M} \to C^0(\TeichSpace(S))$ mapping a metric to its energy spectrum. We say $\mathcal{M}$ satisfies \textit{energy spectrum rigidity} if this map is injective. In light of \Cref{thm:energyspectrumgiveslengthspectrum} we see that this question is closely related to the question which classes of metrics on surfaces satisfy simple length spectrum rigidity. We describe here some examples where energy spectrum rigidity does hold.

\subsection{Hyperbolic metrics}
We consider the set of hyperbolic metrics on $S$, defined up to isotopy. As discussed in \Cref{subsec:teichmullerspace} this is the Teichm\"uller space of $S$. The existence of the harmonic maps under consideration is in this case a consequence of \cite{EellsSampson}.

It follows from elementary considerations on harmonic maps between surfaces that $\TeichSpace(S)$ satisfies energy spectrum rigidity, even without invoking simple length spectrum rigidity. Namely, we see from \Cref{lem:minimumenergyisarea} that a point in Teichm\"uller space can be recovered from its energy spectrum by locating the unique minimum. 

\begin{corollary}\label{cor:hyperbolicmetricenergyspecrigidity}
The set of hyperbolic metrics on $S$, defined up to isotopy, satisfies energy spectrum rigidity.
\end{corollary}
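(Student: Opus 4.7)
The plan is to apply Lemma \ref{lem:minimumenergyisarea} directly: since every hyperbolic metric has (constant) non-positive curvature, this lemma tells us that the energy spectrum of such a metric attains its minimum at a unique point of Teichm\"uller space, and that this point is precisely the isotopy class of the metric itself. Therefore the energy spectrum of a hyperbolic metric remembers the metric by pointing to itself as the unique minimiser.

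Concretely, I would suppose $\rho$ and $\rho'$ are hyperbolic metrics on $S$ whose energy spectra $\Energy, \Energy' \colon \TeichSpace(S) \to \R$ agree as functions. By Lemma \ref{lem:minimumenergyisarea}, applied to $\rho$, the function $\Energy$ satisfies $\Energy(X) \geq \Area(S,\rho)$ for every $X \in \TeichSpace(S)$, with equality exactly when $X = [\rho]$. The same lemma applied to $\rho'$ says $\Energy'$ achieves its unique minimum only at $[\rho']$. Since $\Energy = \Energy'$, the unique minimisers must coincide, so $[\rho] = [\rho']$ in $\TeichSpace(S)$. Recalling the identification of $\TeichSpace(S)$ with the set of hyperbolic metrics on $S$ modulo isotopy (see \Cref{subsec:teichmullerspace}), this yields the desired rigidity.

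There is really no hard step here; the whole content is packaged inside Lemma \ref{lem:minimumenergyisarea}, which in turn rests on the standard fact that for a harmonic diffeomorphism of degree one between closed non-positively curved surfaces, equality in the pointwise inequality $e(h) \geq J(h)$ forces $h$ to be conformal, hence (by Riemann--Hurwitz) a biholomorphism isotopic to the identity. I would emphasise that this argument does \emph{not} need to pass through \Cref{thm:energyspectrumgiveslengthspectrum} or simple length spectrum rigidity at all; the energy spectrum of a hyperbolic metric already identifies the metric by the location of its unique minimum, which is a sharper piece of information than any length data.
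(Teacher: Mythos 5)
Your proposal is correct and is exactly the paper's argument: the corollary is deduced from \Cref{lem:minimumenergyisarea} by observing that the energy spectrum of a hyperbolic metric attains its unique minimum precisely at the corresponding point of $\TeichSpace(S)$, so equal energy spectra force equal minimisers and hence isotopic metrics. Your remark that this bypasses \Cref{thm:energyspectrumgiveslengthspectrum} and simple length spectrum rigidity also matches the paper's own comment.
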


\subsection{Singular flat metrics}
As described in \Cref{subsec:conformalgeometry} a quadratic differential on a surface induces a metric on that surface. Away from the zeroes of the quadratic differential these metrics are locally flat and at the zero points they have a cone singularity of cone angle $(2+p)\pi$, $p\in\N$ (for more information see \cite{DuchinLeiningerRafi}). We call such metrics \textit{singular flat metrics} on the surface. We consider the set $\mathcal{M}$ of singular flat metrics on the surface $S$ that are induced by quadratic differentials, up to isotopy. The space of quadratic differentials, and hence also $\mathcal{M}$, can be canonically identified with the cotangent bundle of $\TeichSpace(S)$.

In \cite{DuchinLeiningerRafi} Duchin, Leiniger and Rafi prove the following theorem.

\begin{theorem}[{\cite[Theorem 1]{DuchinLeiningerRafi}}]\label{thm:singularflatsimplelengthrigidity}
Let $\mathcal{M}_1 \subset \mathcal{M}$ be the set of singular flat metrics on $S$ with area one, defined up to isotopy. The set $\mathcal{M}_1$ satisfies simple length spectrum rigidity.
\end{theorem}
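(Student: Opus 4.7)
The natural framework is the theory of geodesic currents in the sense of Bonahon. The plan is to assign to each area-one singular flat metric $\sigma$ induced by a quadratic differential a geodesic current $L_\sigma$ on $S$ with the defining property that $i(L_\sigma,\gamma) = \ell_\sigma(\gamma)$ for every simple closed curve $\gamma$, where $i(\cdot,\cdot)$ is the intersection form on currents. For the hyperbolic case this is the Liouville current of the metric; for the flat metric of a quadratic differential the analogous construction is obtained by integrating over flat geodesics, weighted against the product of the horizontal and vertical transverse measures. With this in hand, the statement reduces to showing that the assignment $\sigma\mapsto L_\sigma$ is injective on $\mathcal{M}_1$.

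The first part of the argument is a density/continuity upgrade. Weighted simple closed curves form a dense subset of the cone of geodesic currents, and the intersection form is continuous. Hence if $\sigma_1,\sigma_2\in\mathcal{M}_1$ have equal simple length spectra, the equality $i(L_{\sigma_1},\gamma) = i(L_{\sigma_2},\gamma)$ for every simple closed curve $\gamma$ extends by continuity to $i(L_{\sigma_1},\mu) = i(L_{\sigma_2},\mu)$ for every current $\mu$, and non-degeneracy of $i$ on the relevant subspace forces $L_{\sigma_1} = L_{\sigma_2}$.

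The second, substantive part is to recover the flat metric from its current. Any $\sigma\in\mathcal{M}_1$ is reconstructed from the transverse pair of horizontal and vertical measured foliations $(F_h,F_v)$ of the underlying quadratic differential, via the recipe sketched in \Cref{subsec:conformalgeometry}; the area-one normalisation fixes the scalar ambiguity. So it suffices to extract $(F_h,F_v)$ intrinsically from $L_\sigma$. The plan is to characterise these two foliations through the intersection pairing: as projective limits of long powers of Dehn twists applied to a reference curve, or equivalently as the extremal points of the unit ball $\{F\in\mathcal{MF}(S) : i(L_\sigma,F)\le 1\}$ that correspond to cylinder directions of the quadratic differential.

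The main obstacle is the last step. A priori the current only stores cumulative intersection data and not the fine structure of the flat atlas, and producing $(F_h,F_v)$ requires exploiting the $L^1$-like anisotropy of the flat length, as opposed to the $L^2$-like Riemannian length, via a careful analysis of maximal flat cylinders, saddle connections, and singular leaves. This is where the argument uses heavily that $\sigma$ comes from a holomorphic quadratic differential and not merely a piecewise flat metric, since the two transverse foliations are what distinguish this class.
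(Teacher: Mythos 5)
This statement is not proved in the paper at all: it is imported verbatim from Duchin--Leininger--Rafi, so there is no internal argument to compare against. Your proposal does follow the broad strategy of that reference (encode the flat metric as a geodesic current $L_\sigma$ with $i(L_\sigma,\gamma)=\ell_\sigma(\gamma)$, show the simple length spectrum determines the current, then recover the metric from the current), but as written it has a genuine gap at the very first step.

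The gap is the density claim. Weighted \emph{simple} closed curves are not dense in the cone of geodesic currents; their closure is (the cone over) the space of measured laminations $\mathcal{ML}(S)$, which is a nowhere dense subset of the space of currents. It is weighted \emph{arbitrary} closed curves whose span is dense in all currents. Consequently, equality of the simple length spectra only yields $i(L_{\sigma_1},\lambda)=i(L_{\sigma_2},\lambda)$ for all $\lambda\in\mathcal{ML}(S)$, and you cannot then invoke non-degeneracy of the intersection pairing to conclude $L_{\sigma_1}=L_{\sigma_2}$: that argument requires equality of intersection numbers against all currents (in particular against $L_{\sigma_1}$ and $L_{\sigma_2}$ themselves, which are not measured laminations). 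Note that if your density claim were true, simple length spectrum rigidity would follow formally from marked length spectrum rigidity for \emph{any} class of metrics admitting a Liouville-type current, and this theorem --- as well as the Bridgeman--Canary simple length rigidity used elsewhere in this paper --- would be essentially trivial, which it is not. Bridging precisely this gap, i.e.\ showing that for flat metrics the function $\lambda\mapsto i(L_\sigma,\lambda)$ on $\mathcal{ML}(S)$ already determines $L_\sigma$ (via an analysis of the support of the current, cylinder curves, and the transverse foliations), is the substantive content of the Duchin--Leininger--Rafi paper. So the obstacle you flag in your final paragraph is real, but the more serious missing idea sits one step earlier, in the passage from simple closed curves to all currents; your second stage (recovering the quadratic differential from its current) would also need to be carried out with only the $\mathcal{ML}$-restricted data available.
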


Combining this fact with \Cref{thm:energyspectrumgiveslengthspectrum} and \Cref{lem:minimumenergyisarea} easily gives the following corollary.

\begin{corollary}\label{cor:singflatmetricsenergyspecrigidity}
The set of singular flat metrics that are induced by quadratic differentials, defined up to isotopy, satisfies energy spectrum rigidity.
\end{corollary}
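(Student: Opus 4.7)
The plan is to combine \Cref{thm:energyspectrumgiveslengthspectrum} with the simple length spectrum rigidity of Duchin, Leininger and Rafi (\Cref{thm:singularflatsimplelengthrigidity}), using \Cref{lem:minimumenergyisarea} to handle the area normalization. Let $\rho, \rho' \in \mathcal{M}$ be two singular flat metrics induced by quadratic differentials and having equal energy spectrum $\Energy = \Energy'$. Singular flat metrics have cone angles at least $2\pi$ at their singularities and are locally flat elsewhere, so their universal covers are CAT(0) and (by \v{S}varc--Milnor) Gromov hyperbolic, placing them in the non-positively curved regime that the arguments of \Cref{sec:theenergyspectrum} and \Cref{sec:furthercomparison} require.

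First I would extract the area. Applying \Cref{lem:minimumenergyisarea} to both metrics yields
\[
\Area(S,\rho) \,=\, \min_{X \in \TeichSpace(S)} \Energy(X) \,=\, \min_{X \in \TeichSpace(S)} \Energy'(X) \,=\, \Area(S,\rho').
\]
Call this common value $A$. Under a constant rescaling $\rho \mapsto c\rho$, area, energy density and hence the entire energy spectrum all scale by $c$, so $\tilde\rho = \rho/A$ and $\tilde\rho' = \rho'/A$ are area-one singular flat metrics in $\mathcal{M}_1$ that still have equal energy spectrum. By \Cref{thm:energyspectrumgiveslengthspectrum} they have equal simple length spectrum, and \Cref{thm:singularflatsimplelengthrigidity} then shows that $\tilde\rho$ and $\tilde\rho'$ are isotopic. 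Rescaling back by $A$, $\rho$ and $\rho'$ represent the same class in $\mathcal{M}$.

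The main obstacle is a bookkeeping issue rather than a conceptual one: \Cref{thm:energyspectrumgiveslengthspectrum} and \Cref{lem:minimumenergyisarea} were proved for non-positively curved Riemannian metrics, while the metrics in $\mathcal{M}$ have conical singularities. One must verify that each ingredient extends. \Cref{lem:dehntwistlengthestimate} depends only on the CAT(0) and Gromov hyperbolic properties of the universal cover, both of which persist; the upper-bound construction in \Cref{lem:tauupperbound} uses only the Strebel differential and an explicit loop parametrization, so it carries over verbatim; \Cref{lem:minskyestimate} extends using the Korevaar--Schoen theory of energies of maps into singular target spaces; and the pointwise inequality $e(h) \geq J(h)$ underlying \Cref{lem:minimumenergyisarea} holds away from the measure-zero cone locus and therefore integrates to the stated area bound, with equality still forcing the minimizing harmonic map to be conformal on the complement of the cone points (hence on $S$, by removable-singularity considerations).
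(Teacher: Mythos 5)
Your proof is correct and follows the same route as the paper's: \Cref{lem:minimumenergyisarea} forces equal areas, rescaling to area one places both metrics in $\mathcal{M}_1$, \Cref{thm:energyspectrumgiveslengthspectrum} yields equal simple length spectra, and \Cref{thm:singularflatsimplelengthrigidity} concludes. Your closing paragraph justifying the extension of the Riemannian arguments to the singular flat setting is in fact more explicit than the paper, which only remarks that harmonic maps into such targets exist by Korevaar--Schoen since the universal cover is non-positively curved in the sense of Alexandrov.
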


\begin{proof}
Let $\rho, \rho'\in \mathcal{M}$ be two singular flat metrics on $S$ with equal energy spectrum. \Cref{lem:minimumenergyisarea} gives $\Area(S,\rho)=\Area(S,\rho')$. Then the rescaled metrics $\rho/\Area(S,\rho)$ and $\rho'/\Area(S,\rho')$ lie in $\mathcal{M}_1$ and by \Cref{thm:energyspectrumgiveslengthspectrum} have equal simple length spectrum. It now  follows from \Cref{thm:singularflatsimplelengthrigidity} that there exists an isometry between $\rho$ and $\rho'$ that is isotopic to the identity.
\end{proof}

Let us mention that also in this case the energy infimum in the definition of the energy spectrum is always realised by a harmonic map. These are however not harmonic maps in the precise sense we defined above because singular flat metrics are not actual Riemannian metrics. However, a more general notion of harmonic map, allowing for maps into metric spaces, has been developed in \cite{KorevaarSchoen}. Theorem 2.7.1 of that paper yields the existence of harmonic maps into surfaces equipped with singular flat metrics. In order to apply this result we note that if $S$ is a surface of genus at least two equipped with a singular flat metric, then its universal cover is a metric space of non-positive curvature (in the sense of Alexandrov).

\section{Kleinian surface groups}
A \textit{Kleinian surface group} is a representation $\rho \colon \pi_1(S) \to \PSL(2,\C)$ that is discrete and faithful. Because $\PSL(2,\C)$ acts on $\H^3$ by isometries, given a Kleinian surface group $\rho$ we can consider the hyperbolic 3-manifold $N = \H^3/\rho(\pi_1(S))$. The representation $\rho$ induces an identification between $\pi_1(S)$ and $\pi_1(N)$. As a result there is a one-to-one correspondence between the free homotopy classes of loops in $S$ and those of loops in $N$. The translation length of an element $\rho(\gamma)$ ($\gamma\in \pi_1(S)$), denoted $\ell_\rho(\gamma)$, is defined to be the infimum of the lengths of loops in $N$ that lie in the free homotopy class determined by $\gamma$. If $\rho(\gamma)$ is a parabolic element, then $\ell_\rho(\gamma) = 0$. If $\rho(\gamma)$ is an hyperbolic element, then it is conjugate to a matrix of the form
\[
\begin{pmatrix}
\lambda & 0 \\
0 & \lambda^{-1}
\end{pmatrix}
\]
with $\lambda\in \C, \abs{\lambda}>1$. In this case
\begin{equation}\label{eq:translationlength}
\ell_\rho(\gamma) = 2 \log \abs{\lambda}.
\end{equation}
The simple length spectrum of a Kleinian surface group is the vector $(\ell_\rho(\gamma))_{\gamma\in \mathcal{S}}$.

The representation $\rho$ determines a unique homotopy class $[f]$ of maps from $S$ to $N$ that lift to $\rho$-equivariant maps $\widetilde{S} \to \H^3$. We define the energy spectrum of a Kleinian surface group to be the energy spectrum of the hyperbolic manifold $N = \H^3/\rho(\gamma)$ and the homotopy class $[f]$.

In this section we prove the following analogue to \Cref{thm:energyspectrumgiveslengthspectrum}.

\begin{theorem}\label{thm:energyspectrumgiveslengthspectrumkleinian}
Let $\rho, \rho' \colon \Gamma \to \PSL(2,\C)$ be two Kleinian surface groups. If the energy spectra of $\rho$ and $\rho'$ coincide, then their simple simple length spectra coincide.
\end{theorem}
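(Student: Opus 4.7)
The plan is to mirror the proof of \Cref{thm:energyspectrumgiveslengthspectrum} line by line, with the translation length in the Kleinian group taking the role of the geodesic length on the surface. Define $\tau(X,\gamma,n) = \Energy(T_\gamma^n X)/n^2$ and $\tau^\pm(X,\gamma)$ exactly as in \Cref{def:taunotation}, where now $\Energy$ denotes the energy spectrum of $N = \H^3/\rho(\pi_1(S))$ with its canonical homotopy class $[f]$. As in \Cref{rmk:energyofdehntwist}, one absorbs the Dehn twist on the source into the homotopy class on the target, so $\Energy(T_\gamma^n X)$ equals the infimum of Dirichlet energies of Lipschitz maps $X \to N$ in the homotopy class $[f \circ T_\gamma^n]$.

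The upper bound $\tau^+(X,\gamma) \leq \tfrac{1}{2} E_X(\gamma)\ell_\rho^2(\gamma)$ is obtained exactly as in \Cref{lem:tauupperbound}. Fix a base map $f_0 \in [f]$ and pick a closed loop $\eta_N \subset N$ freely homotopic to $f_0 \circ \gamma$ whose length is as close to $\ell_\rho(\gamma)$ as we wish: the closed geodesic when $\rho(\gamma)$ is hyperbolic, a short parabolic loop otherwise. Using the Strebel differential of $\gamma$ on $X$, we modify $f_0$ on a subcylinder $A_\epsilon$ of the Strebel annulus so that the new map $k_n$ wraps $n$ times around $\eta_N$; then $k_n \in [f \circ T_\gamma^n]$ and the energy computation of \Cref{lem:tauupperbound} applies verbatim. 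For the lower bound $\tau^-(X,\gamma) \geq \tfrac{1}{2} i(\gamma,\eta)^2 \ell_\rho^2(\gamma)/E_X(\eta)$, we apply \Cref{lem:minskyestimate}: any competitor $h \in [f \circ T_\gamma^n]$ sends $\eta$ to a loop in $N$ freely homotopic to $\rho(T_\gamma^n \eta)$, whose length is bounded below by the translation length $\ell_\rho(T_\gamma^n \eta)$. Once one knows
\[
\ell_\rho(T_\gamma^n \eta) \geq n \cdot i(\gamma,\eta) \cdot \ell_\rho(\gamma) - C
\]
for some constant $C = C(\gamma,\eta,\rho)$ and all $n$ sufficiently large, \Cref{lem:almostcollarequality} produces for each simple closed curve $\gamma \subset S$ a sequence of Riemann surface structures $X_k$ squeezing both $\tau^-(X_k,\gamma)$ and $\tau^+(X_k,\gamma)$ towards $\tfrac{1}{2}\ell_\rho^2(\gamma)$, and the conclusion of \Cref{thm:energyspectrumgiveslengthspectrum} transfers without change.

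The hard part is therefore the displayed translation length estimate, which is the Kleinian analogue of \Cref{lem:dehntwistlengthestimate}. The surface proof used that $\widetilde{S}$ is CAT(0) and Gromov hyperbolic to build a stairstep path; for Kleinian groups the natural replacement is $\H^3$, which is CAT($-1$) and so Gromov hyperbolic with an explicit $\delta$. The approach is to choose a geodesic representative of $\eta$ meeting $\gamma$ minimally in $(S,\sigma)$ for some auxiliary hyperbolic metric $\sigma$ and to write $T_\gamma^n \eta$ as a word $a_1 \gamma^n a_2 \gamma^n \cdots a_k \gamma^n$ in $\pi_1(S)$, where $k = i(\gamma,\eta)$ and the $a_i$ correspond to arcs of $\eta$ between consecutive intersections with $\gamma$. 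When $\rho(\gamma)$ is hyperbolic with axis $A$ of translation length $\ell_\rho(\gamma)$, for large $n$ the translates $\rho(a_1\gamma^n a_2\cdots a_i)(A)$ should form a quasi-geodesic chain of perpendicular segments along a sequence of copies of $A$, allowing \Cref{lem:obtuseangleestimate} and \Cref{lem:lengthestimatestairsteppath} to be applied directly in $\H^3$. The parabolic case is immediate since both sides vanish or are absorbed into $C$. The most delicate point I anticipate is verifying the alternating-orientation condition of the stairstep path in three dimensions, and handling cleanly any $\rho(a_i)$ that may themselves be parabolic.
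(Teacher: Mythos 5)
Your reduction of the theorem to the translation-length estimate is exactly the paper's: the same $\tau^{\pm}$ formalism, the same modification of \Cref{lem:tauupperbound} (closed geodesic in the hyperbolic case, arbitrarily short loops in the parabolic case), the same use of \Cref{lem:minskyestimate} for the lower bound, and the same invocation of \Cref{lem:almostcollarequality} to squeeze $\tau^{\pm}(X_k,\gamma)$ to $\tfrac12\ell_\rho^2(\gamma)$. Where you diverge is on the one genuinely new ingredient, the estimate $\ell_\rho(T_\gamma^n\eta)\geq n\cdot i(\gamma,\eta)\cdot\ell_\rho(\gamma)-C$. The paper does \emph{not} attempt a geometric stairstep argument in $\H^3$; it restricts to $i(\gamma,\eta)\in\{1,2\}$ (which suffices, since the curves produced by \Cref{lem:almostcollarequality} satisfy this), writes $T_\gamma^n\eta=\sigma\gamma^{-n}\nu\gamma^n$ (resp.\ $\eta\gamma^n$) in $\pi_1(S,x_0)$, conjugates $\rho(\gamma)$ to $\diag(\lambda,\lambda^{-1})$, and computes the top eigenvalue of the product matrix asymptotically, obtaining $\mu_+=\lambda^{2n}(bc'+O(\abs{\lambda}^{-2n}))$. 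The non-degeneracy $bc'\neq 0$ is forced by discreteness and faithfulness (a vanishing coefficient would make $\rho(\sigma\gamma\sigma^{-1})$ and $\rho(\gamma)$ share a fixed point and hence a common power). This is a short, clean computation in the spirit of Bridgeman--Canary's Lemma 2.2, and it even gives the estimate as an equality up to $O(1)$.

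The gap in your version is the claim that \Cref{lem:obtuseangleestimate} and \Cref{lem:lengthestimatestairsteppath} can be ``applied directly in $\H^3$.'' \Cref{lem:obtuseangleestimate} is fine in any CAT(0) $\delta$-hyperbolic space, but the inductive step of \Cref{lem:lengthestimatestairsteppath} is irreducibly two-dimensional: it uses that each geodesic $L_i$ separates the universal cover into two halves and that angles between oriented segments in an oriented surface have a well-defined sign that alternates along the path. Neither of these has a meaning in $\H^3$ --- a geodesic line does not separate, and there is no orientation of angles --- so the separation argument that produces the obtuse angle $\angle_{x_n}(x_0,x_{n+1})\geq\pi/2$ at each stage is simply unavailable. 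You flag ``verifying the alternating-orientation condition in three dimensions'' as a delicate point, but it is not a delicate point to be verified; it is the load-bearing step that fails. One could presumably substitute a genuine local-to-global quasigeodesic argument for broken paths in $\H^3$ with quantitative angle control at the corners (and deal with translates of the axis of $\rho(\gamma)$ possibly being close, with parabolic $\rho(a_i)$, and with the connecting segments not being common perpendiculars), but that is a different and substantially harder proof than the one you sketch. If you instead exploit, as the paper does, that only $i(\gamma,\eta)\in\{1,2\}$ is needed, the whole difficulty evaporates into a $2\times 2$ eigenvalue computation.
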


Bridgeman and Canary prove in \cite[Theorem 1.1]{BridgemanCanary} that a Kleinian surface group is determined up to conjugacy by its simple length spectrum. Combining their result with \Cref{thm:energyspectrumgiveslengthspectrumkleinian} gives the following corollary.

\begin{corollary}\label{cor:kleiniansurfgroupenergyspecrigidity}
If $\rho, \rho' \colon \Gamma \to \PSL(2,\C)$ are Kleinian surface groups with equal energy spectrum, then $\rho'$ is conjugate to either $\rho$ or $\overline{\rho}$.
\end{corollary}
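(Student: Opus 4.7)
The corollary is a direct combination of \Cref{thm:energyspectrumgiveslengthspectrumkleinian}, just established in this section, with the rigidity theorem of Bridgeman and Canary \cite[Theorem 1.1]{BridgemanCanary} cited above. If $\rho$ and $\rho'$ have the same energy spectrum, \Cref{thm:energyspectrumgiveslengthspectrumkleinian} gives $\ell_\rho(\gamma) = \ell_{\rho'}(\gamma)$ for every $\gamma \in \mathcal{S}$, i.e.\ their simple length spectra coincide. Applying \cite[Theorem 1.1]{BridgemanCanary} then yields that $\rho'$ is $\PSL(2,\C)$-conjugate to either $\rho$ or its complex conjugate $\overline{\rho}$, which is exactly the claim of the corollary.

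The $\overline{\rho}$ ambiguity is intrinsic to the problem: by \Cref{eq:translationlength} the translation length of a loxodromic element depends only on $\abs{\lambda}$, which is preserved by complex conjugation, so $\rho$ and $\overline{\rho}$ have identical simple length spectra (and, a fortiori, identical energy spectra). There is no essential obstacle at this stage, since both ingredients are in hand and their conclusions compose cleanly. The only minor consistency point is that ``simple length spectrum'' in our setup and in \cite{BridgemanCanary} refers to the same object, namely the vector $(\ell_\rho(\gamma))_{\gamma \in \mathcal{S}}$ of $\H^3$-translation lengths indexed by homotopy classes of simple closed curves on $S$; this is clear from the definition recalled just after \Cref{eq:translationlength}.
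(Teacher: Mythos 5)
Your proof is correct and follows exactly the paper's route: the corollary is obtained by composing \Cref{thm:energyspectrumgiveslengthspectrumkleinian} with the simple length spectrum rigidity of Bridgeman--Canary \cite[Theorem 1.1]{BridgemanCanary}, which is precisely how the paper derives it. Your added remark that $\rho$ and $\overline{\rho}$ necessarily share the same simple length spectrum (and hence the same energy spectrum) is a correct and worthwhile observation explaining why the conjugation ambiguity cannot be removed.
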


The proof detailed in \Cref{sec:theenergyspectrum} can largely be carried over to the case of Kleinian surface groups. We do, however, need a replacement for \Cref{lem:dehntwistlengthestimate}. This will be provided by the following lemma.

\begin{lemma}\label{lem:dehntwistlengthestimatekleinian}
Let $\rho \colon \Gamma \to \PSL(2,\C)$ be a Kleinian surface group. Let $\gamma,\eta \subset S$ be simple closed curves with $i(\gamma,\eta) \in \{1,2\}$. Then there exists a constant $C=C(\rho,\gamma,\eta)>0$ such that
\[
\ell_\rho(T^n_\gamma \eta) \geq n \cdot i(\gamma,\eta) \cdot \ell_\rho(\gamma) - C
\]
for all $n\geq 1$.
\end{lemma}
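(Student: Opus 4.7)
My plan is to reduce the estimate to a bound on the trace of $\rho(T_\gamma^n\eta)$ in $\SL(2,\C)$ and then exploit the discreteness of $\rho$ to control the leading trace asymptotics. If $\rho(\gamma)$ is parabolic then $\ell_\rho(\gamma) = 0$ and the inequality is trivial, so I may assume $\rho(\gamma)$ is loxodromic with attracting eigenvalue $\lambda$, so that $\ell_\rho(\gamma) = 2\log|\lambda|$. First, using the standard formula for the action of a Dehn twist on $\pi_1(S)$, I would cut $\eta$ at its $k = i(\gamma,\eta)$ intersections with $\gamma$, insert a copy of $\gamma^{\pm n}$ at each crossing, and conclude that $T_\gamma^n\eta$ is conjugate in $\pi_1(S)$ to a word
\[
\gamma^{n\epsilon_1}\mu_1\,\gamma^{n\epsilon_2}\mu_2 \cdots \gamma^{n\epsilon_k}\mu_k,
\]
where $\epsilon_i \in \{\pm 1\}$ depend only on the orientations of the crossings and the $\mu_i \in \pi_1(S)$ are non-trivial elements, independent of $n$, not lying in $\langle\gamma\rangle$ (minimality of $i(\gamma,\eta)$ prevents any arc of $\eta$ between consecutive intersections from being homotopic into $\gamma$).

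Next I would invoke the complex length formula $\tr(g) = \pm 2\cosh\bigl((\ell(g)+i\theta)/2\bigr)$ for loxodromic $g \in \SL(2,\C)$, which yields the elementary inequality $\ell(g) \geq 2\log|\tr(g)| - C_0$ whenever $|\tr(g)|$ is bounded below by a fixed constant greater than $2$. It therefore suffices to exhibit a constant $c > 0$ with $|\tr\rho(T_\gamma^n\eta)| \geq c\,|\lambda|^{kn}$ for all $n$ sufficiently large, after which finitely many small values of $n$ can be absorbed into the constant $C$. For this trace computation I would conjugate so that $\rho(\gamma) = \diag(\lambda,\lambda^{-1})$ and write $\rho(\mu_j) = \bigl(\begin{smallmatrix} a_j & b_j \\ c_j & d_j \end{smallmatrix}\bigr)$. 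A direct multiplication then shows that $\tr\rho(T_\gamma^n\eta)$ is a Laurent polynomial in $\lambda^n$ whose term of order $|\lambda|^{kn}$ is a single monomial in the entries of the $\rho(\mu_j)$, depending on the sign pattern: for $k=1$ it is $a_1$ or $d_1$, and for $k=2$ it is one of $a_1a_2$, $d_1d_2$, $b_1c_2$, or $c_1b_2$.

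The main obstacle will be showing that this leading monomial is non-zero, and for this I plan to invoke the standard fact that in a discrete subgroup of $\PSL(2,\C)$ two loxodromic elements sharing exactly one fixed point on $\partial\H^3$ cannot coexist (iterating one of them on the other's fixed point would produce an accumulation point of the orbit in $\partial\H^3$, contradicting discreteness). Applied to $\rho(\gamma)$ and its conjugate $\rho(\mu_j)\rho(\gamma)\rho(\mu_j)^{-1}$, this forces their axes either to coincide or to be disjoint. Axis coincidence would place $\rho(\mu_j)$ in the cyclic centralizer of $\rho(\gamma)$ inside $\rho(\pi_1(S))$, so by faithfulness $\mu_j \in \langle\gamma\rangle$, contradicting the earlier observation. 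Hence the axes are disjoint, which says $\rho(\mu_j)$ sends neither $0$ nor $\infty$ into $\{0,\infty\}$; this is exactly the condition that each of $a_j, b_j, c_j, d_j$ is non-zero. Combining this with the trace computation and the trace-length inequality yields $\ell_\rho(T_\gamma^n\eta) \geq kn\,\ell_\rho(\gamma) - C$ for $n$ large, and enlarging $C$ handles the remaining finitely many values.
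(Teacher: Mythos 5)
Your proposal is correct and follows essentially the same route as the paper: write $T_\gamma^n\eta$ as a word with $\gamma^{\pm n}$ inserted at the intersection points, conjugate $\rho(\gamma)$ to $\diag(\lambda,\lambda^{-1})$, identify the coefficient of $\lambda^{kn}$ in the trace (equivalently the largest eigenvalue), and rule out its vanishing by observing that a zero matrix entry would force $\rho(\gamma)$ and a conjugate to share a fixed point, contradicting discreteness and faithfulness. The only cosmetic differences are that you phrase the growth via $\ell(g)\geq 2\log\abs{\tr(g)}-C_0$ rather than the explicit eigenvalue formula, and you treat the case $i(\gamma,\eta)=1$ directly where the paper defers it to Bridgeman--Canary.
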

Our proof is along similar lines as \cite[Lemma 2.2]{BridgemanCanary}.
\begin{proof}
We first consider the case $i(\gamma,\eta) = 2$. Let us denote $\gamma\cap \eta = \{x_0,x_1\}$. We assume that $\gamma$ and $\eta$ are parametrised loops starting at $x_0$. If we take $x_0$ as the basepoint of the fundamental group, then we can consider $\gamma$ and $\eta$ as elements of $\pi_1(S,x_0)$. We denote by $\gamma_1$ and $\eta_1$ the subarcs of $\gamma$ and $\eta$ respectively that connect $x_0$ to $x_1$ and we denote by $\gamma_2$ and $\eta_2$ the subarcs connecting $x_1$ to $x_0$ (see \Cref{fig:dehntwistdiagram}).

\begin{figure}[ht]
\centering
{
	\resizebox{80mm}{!}
	{\huge
		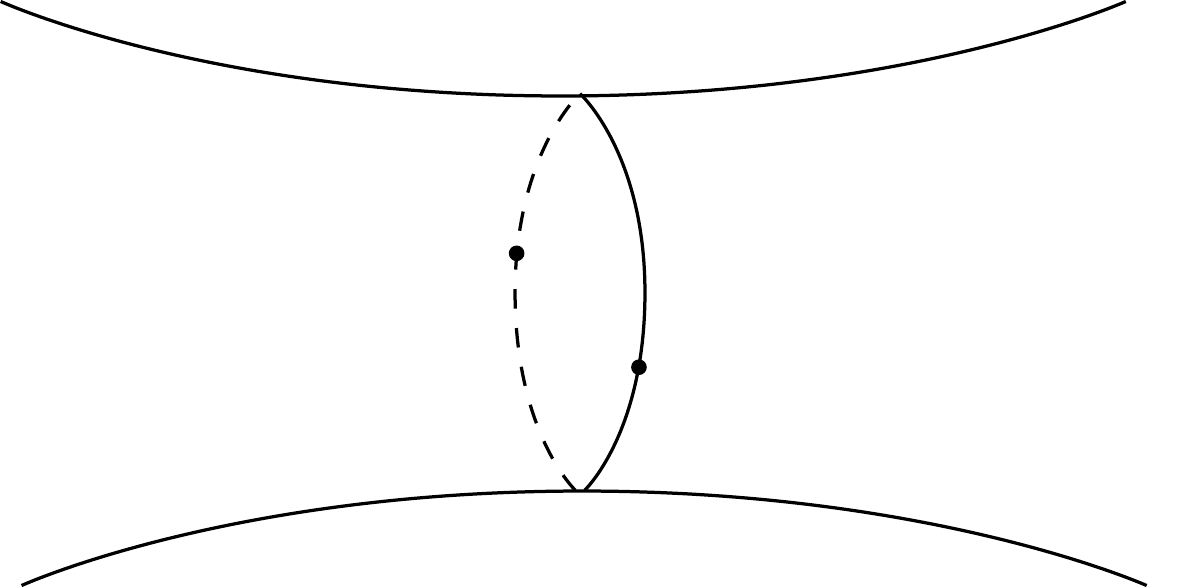
	}
}
\caption{Overview of the positions of the arcs $\gamma_1, \gamma_2, \eta_1$ and $\eta_2$.}\label{fig:dehntwistdiagram}
\end{figure}

We now find the following expression for the element $T^n_\gamma \eta \in \pi_1(S, x_0)$,
\begin{align*}
T^n_\gamma \eta &= \eta_2 (\gamma_2^{-1} \gamma_1^{-1})^n \eta_1 \gamma^n\\
&= \eta_2 \gamma_1 (\gamma_1^{-1} \gamma_2^{-1})^n \gamma_1^{-1} \eta_1 \gamma^n\\
&= \sigma \gamma^{-n} \nu \gamma^n
\end{align*}
where we put $\sigma = \eta_2 \gamma_1, \nu = \gamma_1^{-1} \eta_1 \in \pi_1(S,x_0)$.

We note that if $\rho(\gamma)$ is a parabolic element, then $\ell_\rho(\gamma) = 0$ and the statement is trivial. Hence, from now on we assume $\rho(\gamma)$ is a hyperbolic element. By conjugating the representation $\rho$ we can assume that, for some $\lambda\in \C, \abs{\lambda} > 1$, we have
\[
\rho(\gamma) = \begin{pmatrix}
\lambda & 0 \\
0 & \lambda^{-1}
\end{pmatrix}.
\]
Note that a matrix representing an element of $\PSL(2,\C)$ is only determined up to a multiplication by $\pm \id$. However, for our calculation of the translation length this does not matter.

For suitable coefficients $a,b,c,d,a',b',c',d'\in \C$ we can write 
\[
\rho(\sigma) = \begin{pmatrix}
a & b\\
c & d
\end{pmatrix} \text{ and } \rho(\nu) = \begin{pmatrix}
a' & b'\\
c' & d'
\end{pmatrix}.
\]
We note that coefficients of these matrices do not vanish.  Namely, if a coefficient of, say, $\rho(\sigma)$ vanishes, then it maps a fixed point of $\rho(\gamma)$ to a fixed point of $\rho(\gamma)$. Then $\rho(\sigma \gamma \sigma^{-1})$ and $\rho(\gamma)$ share a fixed point which implies they must have a common power because $\rho(\Gamma)$ is discrete. Because the elements $\gamma$ and $\sigma\gamma\sigma^{-1}$ do not have a common power this would contradict that the representation $\rho$ is faithful.

A simple calculation yields that
\[
\rho(T^n_\gamma\eta) = \rho(\sigma\gamma^{-n} \nu \gamma^n) = 
\begin{pmatrix}
a a' + \lambda^2 b c' & b d' + \lambda^{-2} a b' \\
c a' + \lambda^2 d c' & d d' + \lambda^{-2} c b'
\end{pmatrix}.
\]
Now if $\left(\begin{smallmatrix} \alpha & \beta\\ \gamma & \delta \end{smallmatrix}\right) \in \SL(2,\C)$, then its eigenvalues are given by
\[
\mu_{\pm} = \frac{\alpha+ \delta}{2} \pm \frac{1}{2} \sqrt{(\alpha+\delta)^2 - 4}.
\]
Applying this to $\rho(T^n_\gamma\eta)$ (that is, taking $\alpha = a a' + \lambda^2 b c'$ and $\delta = d d' + \lambda^{-2} c b'$) we find that
\[
\mu_+ = \lambda^{2n}(b c' + O(\abs{\lambda}^{-2n})).
\]
Using \Cref{eq:translationlength} and the fact that $bc' \neq 0$ gives
\begin{align*}
\ell_\rho(T^n_\gamma\eta) &= 2 \log \abs{\mu_+} = 4 \cdot n \cdot \log \abs{\lambda} + \log(\abs{bc' + O(\abs{\lambda}^{-2n})}) \\&= 2 \cdot n \cdot \ell_\rho(\gamma) + O(1) = i(\gamma,\eta) \cdot n \cdot \ell_\rho(\gamma) + O(1) \text{ as } n\to\infty
\end{align*}
This proves the lemma for the case $i(\gamma,\eta) = 2$. In the case $i(\gamma,\eta)=1$ we have that $T^n_\gamma\eta = \eta \gamma^n$. The calculation of the largest eigenvalue of $\rho(\eta\gamma^n)$ is similar and is carried out in \cite[Lemma 2.2]{BridgemanCanary}. Filling the formula of that lemma into \Cref{eq:translationlength} immediately gives the result also in this case.
\end{proof}
We can now give a proof of \Cref{thm:energyspectrumgiveslengthspectrumkleinian}.
\begin{proof}[Proof of \Cref{thm:energyspectrumgiveslengthspectrumkleinian}]
The proof of \Cref{thm:energyspectrumgiveslengthspectrum} goes through in the present situation mostly unchanged. Let us only point the modifications that need to be made. In this proof we denote by $[f]$ the homotopy class of maps $S\to N$ that lift to a $\rho$-equivariant map $\widetilde{S} \to \H^3$.

First we consider the proof of \Cref{lem:tauupperbound}. Let $\gamma \in \pi_1(S)$ be an element that corresponds to a simple closed curve. If $\rho(\gamma)$ is hyperbolic, then there exists a length minimising geodesic loop $\eta \colon \R/\Z \to N$ in the free homotopy class determined by $\gamma$. By deforming a map in $[f]$ we can construct a Lipschitz continuous map $k_0 \colon S\to N$ such that $k_0\in [f]$ and $k_0(x,[y]) = \eta([y])$ on $A_\epsilon$ (notation as in the proof of \Cref{def:taunotation}). The maps $k_n$ can then be constructed as before and the energy estimates also go through. We find that $\tau^+(X,\gamma) \leq \frac{1}{2}E_X(\gamma) \cdot \ell_\rho^2(\gamma)$.

If $\rho(\gamma)$ is a parabolic element, then no such geodesic loop exists. However, since $\ell_\rho(\gamma) = 0$ there exists for every $\delta>0$ a closed loop $\eta \colon \R/\Z \to N$ with $l(\eta) \leq \delta$. If we then take a map $k_0 \colon S \to N$ in the homotopy class $[f]$ with $k_0(x,[y]) = \eta([y])$ on $A_\epsilon$ and carry out the rest of the argument of the proof of proof of \Cref{lem:tauupperbound} we find
\[
\tau^+(X,\gamma) \leq \frac{1}{2} E_X(\gamma) \cdot l^2(\eta) \leq \frac{1}{2} E_X(\gamma) \cdot \delta^2.
\]
Since $\delta$ was arbitrary $\tau^+(X,\gamma) = \frac{1}{2} E_X(\gamma) \cdot \ell_\rho^2(\gamma) = 0$ follows.

Let us now consider the proof of \Cref{lem:taulowerbound}. Suppose $\gamma, \eta \in \pi_1(S)$ correspond to simple closed curves with $i(\gamma,\eta) \in \{1,2\}$. Any map in $[f\circ T_\gamma^n]$ maps the curve $\eta$ to a curve in the free homotopy class determined by $T^n_\gamma\eta$. The results of \Cref{lem:dehntwistlengthestimatekleinian} and \Cref{lem:minskyestimate} then give rise to the estimate
\[
\tau^-(X,\gamma) \geq \frac{1}{2} \frac{i(\gamma,\eta)^2 \cdot \ell_\rho^2(\gamma)}{E_X(\eta)} 
\]
in the same way as in the proof of \Cref{lem:taulowerbound}.

It follows that the estimates of \Cref{eq:lengthestimatefromenergy} are also true in the present situation whenever $i(\gamma,\eta) \in \{1,2\}$. Because the curves $\gamma$ and $\eta$ constructed in \Cref{lem:almostcollarequality} do satisfy this condition we see that the remainder of the proof of \Cref{thm:energyspectrumgiveslengthspectrum} can now be followed verbatim.
\end{proof}

\section{Hitchin representations}\label{sec:hitchinrepresentationsenergyspectrum}
A \textit{Hitchin representation} is a representation $\rho \colon \pi_1(S) \to \PSL(n,\R)$ that lies in a particular connected component (discovered by Hitchin in \cite{HitchinLieGroups}) of the representation variety $\Rep(\pi_1(S), \PSL(n,\R))$. Such representations are discrete, faithful (\cite{LabourieAnosovFlows}) and act isometrically on the symmetric space $\PSL(n,\R)/\PSO(n)$. It follows that their simple length spectrum and energy spectrum can be defined in the same manner as in the previous section. 

As stated in the introduction our main interest is the study of the energy spectrum for Hitchin representations. Unfortunately, the methods presented here are not sufficient to conclude that a Hitchin representation is uniquely determined by its energy functional. Let us briefly describe the difficulty we encounter.

The author believes that an analogue of \Cref{lem:dehntwistlengthestimatekleinian} holds also for Hitchin representations. Then the proof presented in the previous section can be carried out for Hitchin representations. Hence, it seems likely that their simple length spectrum is also determined by their energy spectrum. However, it is not known to the author whether a Hitchin representation is determined by its simple length spectrum (as we define it here). 

Let us point out that very closely related results are obtain by Bridgeman, Canary and Labourie in \cite{BridgemanCanaryLabourie}. Namely, they prove that Hitchin representations are rigid for a different type of simple length spectrum\footnote{Note that in \cite{BridgemanCanaryLabourie} the term `simple length spectrum' is also used; however, it does not refer to the same quantity we consider here.}. Let us briefly describe the difference. If $\gamma\in \pi_1(S)$, then the $\rho(\gamma)$ is a diagonalisable matrix with real eigenvalues (which are determined up to sign). Denote these by $\lambda_1,\hdots, \lambda_n$. Then the spectral length of $\rho(\gamma)$ is $L_\rho(\gamma) = \max_{i=1,\hdots,n} \abs{\lambda_i}$ and its trace is $\abs{\text{Tr}(\rho(\gamma))} = \sum_{i=1}^n \abs{\lambda_i}$. In \cite{BridgemanCanaryLabourie} it is proved that a Hitchin representation is determined, up to conjugacy, by its simple (spectral) length spectrum $(L_\rho(\gamma))_{\gamma\in \mathcal{S}}$ and by its simple trace spectrum $(\abs{\text{Tr}(\rho(\gamma))})_{\gamma\in \mathcal{S}}$. In contrast, the simple length spectrum we consider in this paper assigns to each simple closed curve $\gamma$ the translation length of $\rho(\gamma)$, which is given by $\ell_\rho(\gamma) = \sqrt{\sum_{i=1}^n \abs{\lambda_i(\rho(\gamma))}^2}$. So in order to finish the circle of ideas presented in this paper it remains to answer the question whether a Hitchin representation is determined, up to conjugacy, by its simple (translation) length spectrum.

\bibliographystyle{alpha}
\bibliography{bibliography}

\end{document}